\def\qmod#1#2{{\hbox{}^{\displaystyle{#1}}}\!\big/\!\hbox{}_{
\displaystyle{#2}}}
\def\resto#1#2{{
#1\hskip 0.4ex\vline_{\hskip 0.2ex\raisebox{-0,2ex}
{{${\scriptstyle #2}$}}}}}
\def\textmap#1{\mathop{\vbox{\ialign{
                                  ##\crcr
      ${\scriptstyle\hfil\;\;#1\;\;\hfil}$\crcr
      \noalign{\kern 1pt\nointerlineskip}
      \rightarrowfill\crcr}}\;}}
\def\bigtextmap#1{\mathop{\vbox{\ialign{
                                  ##\crcr
      ${\hfil\;\;#1\;\;\hfil}$\crcr
      \noalign{\kern 1pt\nointerlineskip}
      \rightarrowfill\crcr}}\;}}
\def\textlmap#1{\mathop{\vbox{\ialign{
                                  ##\crcr
      ${\scriptstyle\hfil\;\;#1\;\;\hfil}$\crcr
      \noalign{\kern-1pt\nointerlineskip}
      \leftarrowfill\crcr}}\;}}
\def\B{{\mathbb B}}
\def\C{{\mathbb C}}
\def\H{{\mathbb H}}
\def\M{{\mathbb M}}
\def\N{{\mathbb N}}
\def\R{{\mathbb R}}
\def\Z{{\mathbb Z}}
\def\g{{\mathfrak g}}
\def\hg{{\mathfrak h}}
\def\jg{{\mathfrak j}}
\def\kg{{\mathfrak k}}
\def\pg{{\mathfrak p}}
\def\sg{{\mathfrak s}}
\def\Jg{{\mathfrak J}}
\theoremstyle{remark}
\theoremstyle{plain}
\newtheorem{sz}{Satz}[section]
\newtheorem{thry}[sz]{Theorem}
\newtheorem{pr}[sz]{Proposition}
\newtheorem{co}[sz]{Corollary}
\newtheorem{dt}[sz]{Definition}
\theoremstyle{remark}
\newtheorem{re}[sz]{Remark} 
\theoremstyle{plain}
\def\End{\mathrm {End}}
\def\U{\mathrm{U}}
\def\SU{\mathrm {SU}}
\def\SO{\mathrm {SO}}
\def\PU{\mathrm {PU}}
\def\GL{\mathrm {GL}}
\def\PSL{\mathrm {PSL}}
\def\S{\mathrm {S}}
\def\su{\mathrm {su}}
\def\Iso{\mathrm {Iso}}
\def\Hom{\mathrm{Hom}}
\def\vol{\mathrm{vol}}
\def\id{ \mathrm{id}}
\def\Im{\mathrm{Im}}
\def\im{\mathrm{im}}
\def\ad{\mathrm {ad}}
\def\U2{\mathrm{U(2)}}
\def\niq{=\kern-.18cm /\kern.08cm}
\def\ad{\mathrm{ad}}
\newcommand{\cal}{\mathcal}
\DeclareFontFamily{OMX}{MnSymbolE}{}
\DeclareSymbolFont{MnLargeSymbols}{OMX}{MnSymbolE}{m}{n}
\DeclareFontShape{OMX}{MnSymbolE}{m}{n}{
    <-6>  MnSymbolE5
   <6-7>  MnSymbolE6
   <7-8>  MnSymbolE7
   <8-9>  MnSymbolE8
   <9-10> MnSymbolE9
  <10-12> MnSymbolE10
  <12->   MnSymbolE12
}{}
\DeclareFontShape{OMX}{MnSymbolE}{b}{n}{
    <-6>  MnSymbolE-Bold5
   <6-7>  MnSymbolE-Bold6
   <7-8>  MnSymbolE-Bold7
   <8-9>  MnSymbolE-Bold8
   <9-10> MnSymbolE-Bold9
  <10-12> MnSymbolE-Bold10
  <12->   MnSymbolE-Bold12
}{}
\let\llangle\@undefined
\let\rrangle\@undefined
\DeclareMathDelimiter{\llangle}{\mathopen}%
                     {MnLargeSymbols}{'164}{MnLargeSymbols}{'164}
\DeclareMathDelimiter{\rrangle}{\mathclose}%
                     {MnLargeSymbols}{'171}{MnLargeSymbols}{'171}
 \def\psp#1#2%
\begin{document}

\title[Locally homogeneous connections over  Riemann surfaces] {Locally homogeneous connections on principal bundles over hyperbolic Riemann surfaces} 
\author{Arash Bazdar }
\address{Aix Marseille Université, CNRS, Centrale Marseille, I2M, UMR 7373, 13453 Marseille, France}
\email[Arash Bazdar]{bazdar.arash@gmail.com}
\author{Andrei  Teleman}
\email[Andrei  Teleman]{andrei.teleman@univ-amu.fr}

\begin{abstract}

Let $g$ be locally homogeneous (LH) Riemannian metric on a differentiable compact manifold $M$, and $K$ be a compact Lie group endowed with an $\mathrm {ad}$-invariant inner product on its Lie algebra $\mathfrak{k}$.  
A connection $A$ on a principal $K$-bundle $p:P\to M$ on $M$ is locally homogeneous  if for  any two points $x_1$, $x_2\in M$ there exists an isometry   $\varphi:U_1\to U_2$ between open neighborhoods  $U_i\ni x_i$ which sends $x_1$ to $x_2$ and admits a $\varphi$-covering bundle isomorphism preserving the connection $A$.

This condition is invariant under the action of the automorphism group (gauge group) of the bundle, so the classification problem for LH connections leads to an interesting  moduli problem: for fixed objects $(M,g,K)$ as above describe geometrically the moduli space of all 	LH connections on principal $K$-bundles on $M$ (up to bundle isomorphisms).

Note that if $A$ is LH, then the associated connection metric $g_A$ on $P$ is locally homogenous, so it defines a geometric structure (in the sense of Thurston) on the total space of the bundle. Therefore this moduli problem is related to the classification of LH (geometric) Riemannian manifolds which admit a Riemannian submersion onto the given manifold $M$. 

Omitting the details, our moduli problem concerns the classification   of geometric fibre bundles over a given geometric base.  

We develop a general method for describing moduli spaces of LH connections on a given base. Using our method we give explicit descriptions of these moduli spaces when the base manifold is a hyperbolic Riemann surface $(M,g)$ and $K\in\{\mathrm {S}^1,\mathrm{ PU}(2)\}$.  The case $K=\mathrm{S}^1$ leads to a new construction of the moduli spaces of Yang-Mills $\mathrm{S}^1$-connections on hyperbolic Riemann surfaces, and the case  	$K=\mathrm{ PU}(2)$ leads to a one-parameter family of compact, 5-dimensional geometric manifolds, which we study in detail.

  \end{abstract}
\maketitle
\begin{quotation}
	\noindent{\bf Key Words}: {Geometric structures, Homogeneous manifolds, Principal bundles, Connections
	}
	
	\noindent{\bf 2010 Mathematics Subject Classification}:  Primary 53C07. Secondary 53C30
\end{quotation}

\tableofcontents

\section{Introduction}

\subsection{From locally homogeneous to homogeneous}
\label{Intro1}

Let $(M,g)$ be a Riemannian manifold, $K$ be a compact Lie group, $p:P\to M$ be a principal $K$ bundle on $M$, and $A$ connection on $P$. Fix an $\ad$-invariant inner product on the Lie algebra $\kg$ of $K$. Using these objects we obtain a Riemannian metric $g_A$ on $P$ uniquely determined by the conditions:
\begin{enumerate}
\item The restrictions of the differential  $p_{*}$ to the $A$-horizontal spaces are isometries, in particular $p$ is a Riemannian submersion.
\item The $A$-horizontal and vertical distributions are orthogonal.
\item For any $y\in P$ the infinitesimal action $\kg\mapsto T_y(P)$ is an isometric embedding. 
\end{enumerate}

This class of metrics on principal bundles, called  {\it connection metrics} by some authors \cite{FZ}, have been intensively studied in the literature. For instance in \cite{Je}, \cite{WZ} the authors study classes of metrics  $g_A$ which are Einstein.  In \cite{BK} the authors study classes of Einstein connection metrics on $\S^1$-bundles and $\S^3$-bundles over homogeneous Einstein manifolds.

 The main problem studied in this article   is the classification of {\it locally homogeneous} connection metrics.  Recall that a Riemannian metric $g$ on a differentiable manifold $M$ is    locally homogeneous if, for every two points $x_1$, $x_2\in M$ there exists   an isometry $\varphi:U_1\to U_2$ between open neighborhoods $U_i$ of $x_i$ such that $\varphi(x_1)=x_2$.

Let $(M,g)$ be a connected, compact, LH Riemannian manifold, $K$ be a connected, compact Lie group, and $p:P\to M$ be a principal $K$-bundle on $M$. 
\begin{dt} A  connection $A$ on $P$ will be called locally homogeneous (LH) if for
every two points $x_1$, $x_2\in M$ there exists open neighborhoods $U_i$ of $x_i$ respectively, an isometry $\varphi:U_1\to U_2$ such that $\varphi(x_1)=x_2$, and a $\varphi$-covering bundle isomorphism	$\Phi:P_{U_1}\to P_{U_2}$ such that $\Phi^*(A_{U_2})=A_{U_1}$.
\end{dt}
In this definition the subscript $_U$ denotes ``restriction   to $U$".
A connection $A$ as above is LH on $(M,g)$ if and only if   $(g,P\textmap{p} M,A)$ is a locally homogeneous triple on $M$ in the sense of \cite{Ba1}.    The classification of LH connections on a given  connected,  compact LH Riemannian manifold $(M,g)$   is related to the classification of compact LH  manifolds. Indeed,  the connection metric $g_A$ associated with any LH connection $A\in {\cal A}(P)$ is locally homogeneous, hence it defines a geometric structure in the sense of Thurston \cite[p. 358]{Th} on the total space $P$. Therefore the classification of  LH connections on $(M,g)$ is related to the classification of compact, {\it geometric} fiber bundles over $M$. This provides a strong motivation for studying and classifying locally homogeneous connections. \vspace{1mm}

Moreover the classification of 3-dimensional and 4-dimensional geometries (\cite{Th}, \cite{Sc}, \cite{Wa}, \cite{Fi}) shows that such ``fibered" geometries  are quite abundant. In dimension 3 the following geometries are fibered in our sense (can be defined by connection metrics associated with LH connections):  $\mathbf{E}^3$, $\mathbf{S}^3$, $\mathbf{S}^2\times \mathbf{E}^1$, $\mathbf{H}^2\times\mathbf{E}^1$, $\widetilde{\mathbf{SL}_2}$, $\mathbf{Nil}^3$.  In dimension 4 we have many interesting examples of fibered geometric manifolds, some of them possessing a complex structure compatible with the geometric structure \cite{Wa}. For instance the geometric structure of a topologically non-trivial principal elliptic bundle over a Riemann surface $Y$  can be defined by an LH connection metric on a  $T^2$-bundle over $Y$. The corresponding geometries are  $\mathbf{Nil}^3\times \mathbf{E}^1$, $\mathbf{S}^3\times \mathbf{E}^1$, $\widetilde{\mathbf{SL}_2}\times \mathbf{E}^1$. A large class of fibered geometric 4-manifolds is studied  in the articles \cite{Ue1}, \cite{Ue2} which deal with the classification  of geometric Seifert 4-manifolds.\vspace{1mm}
 \\

Our strategy for the classification of LH connections on principal bundles is inspired by a fundamental theorem of Singer \cite{Si}, which states that any LH, complete, simply connected Riemannian manifold is homogeneous. Let $(M,g)$ be a complete Riemannian manifold, and $\pi:\tilde M\to M$ be  the universal cover of $M$. Singer's theorem   shows that the metric $\tilde g:=\pi^*g$  is homogeneous. Therefore   any LH metric on $M$ is the quotient of a homogeneous metric on the universal cover $\tilde M$. A similar classification theorem holds for LH connections \cite{Ba1}, \cite{Ba2}:

\begin{thry} \label{mainIntro} Let $(M,g)$ be a compact LH Riemannian manifold,  $K$ be a compact Lie group, and $p:P\to M$ be a principal $K$-bundle on $M$. Let $\pi:\tilde M\to M$ be the universal cover of $M$, $\Gamma$ be the corresponding covering transformation group. Then, for any   LH connection $A$ on $P$ there exists
\begin{enumerate}
\item   A connection $B$ on the pull-back bundle  $Q:= \pi^*(P)$.
\item  A  closed subgroup $G\subset \Iso(\tilde M,\tilde g)$ acting transitively  on $\tilde M$ which contains $\Gamma$ and leaves  invariant the gauge class $[B]\in {\cal B}(Q)$.
\item A lift $\jg:\Gamma\to {\cal G}_G^B(Q)$ of  the inclusion monomorphism $\iota_\Gamma:\Gamma\to G$, where ${\cal G}_G^B(Q)$ stands for the group of automorphisms of the pair $(Q,B)$ which lift transformations in $G$.
\item An isomorphism between the $\Gamma$-quotient of $(Q,B)$ and the initial pair $(P,A)$.

\end{enumerate}
\end{thry}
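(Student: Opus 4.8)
The plan is to pull the pair $(P,A)$ back to the universal cover, apply a Singer--Nomizu type rigidity theorem for the decorated structure ``metric $+$ connection'', and then identify the deck transformation group together with its tautological lift to the bundle. \emph{Step 1 (reduction to $\tilde M$).} Set $Q\coloneqq\pi^*(P)$, let $\hat\pi:Q\to P$ be the canonical bundle morphism covering $\pi$, and put $B\coloneqq\hat\pi^*(A)$. Since $\pi$ is a local isometry, for any $\tilde x,\tilde x'\in\tilde M$ one may choose neighbourhoods that $\pi$ maps diffeomorphically onto neighbourhoods of $\pi(\tilde x)$ and $\pi(\tilde x')$, and then lift the data produced by the local homogeneity of $A$ on $M$ — an isometry $\varphi$ and a $\varphi$-covering bundle isomorphism $\Phi$ with $\Phi^*(A_{U'})=A_U$ — to a local isometry $\tilde\varphi$ of $(\tilde M,\tilde g)$ and a $\tilde\varphi$-covering isomorphism $\tilde\Phi$ of $Q$ with $\tilde\Phi^*(B_{\tilde U'})=B_{\tilde U}$. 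Hence $(Q,B)$ is a LH pair on the complete, simply connected manifold $(\tilde M,\tilde g)$.

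\emph{Step 2 (homogeneity of the pair).} This is the heart of the proof: one establishes a bundle version of Singer's theorem to the effect that a LH pair on a complete, simply connected Riemannian manifold is homogeneous. Concretely, let $\mathrm{Aut}(Q,B)$ be the group of automorphisms of $(Q,B)$ that cover isometries of $(\tilde M,\tilde g)$; one shows it projects onto a closed subgroup $G\subset\Iso(\tilde M,\tilde g)$ which acts transitively, for which the projection ${\cal G}_G^B(Q)\to G$ is surjective, and which leaves $B$ (a fortiori $[B]$) invariant by construction. The argument follows the Singer--Nomizu scheme. First, a $B$-preserving local isometry is determined by a jet of finite order of $(\tilde g,F_B)$ at a point; this relies on an infinitesimal-homogeneity (``Singer'') invariant of the pair, built from the covariant derivatives of the Riemannian curvature $R^{\tilde g}$ and of $F_B$, whose stabilization order is finite because of local homogeneity. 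Second, the germ of such an automorphism is continued along every $\tilde g$-geodesic using completeness, and simple connectivity of $\tilde M$ makes the continuation path-independent, so one obtains a global automorphism of $(Q,B)$. Third, the pseudogroup of $B$-preserving local isometries acts transitively by Step 1, hence so does the global group it generates, and a properness argument shows that its image $G$ in $\Iso(\tilde M,\tilde g)$ is closed.

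\emph{Step 3 (deck group, lift, and quotient).} The group $\Gamma$ acts on $Q=\pi^*P$ through the first factor, $\gamma\cdot(\tilde x,e)=(\gamma\tilde x,e)$; write this action as $\gamma\mapsto\hat\gamma$. Each $\hat\gamma$ covers $\gamma\in\Iso(\tilde M,\tilde g)$ and satisfies $\hat\pi\circ\hat\gamma=\hat\pi$, whence $\hat\gamma^*B=\hat\gamma^*\hat\pi^*A=\hat\pi^*A=B$, so $\hat\gamma\in\mathrm{Aut}(Q,B)$ and therefore $\Gamma\subset G$. Setting $\jg(\gamma)\coloneqq\hat\gamma$ defines a homomorphism $\jg:\Gamma\to{\cal G}_G^B(Q)$ lifting the inclusion $\iota_\Gamma$, because $\gamma\mapsto\hat\gamma$ is an action by automorphisms of $(Q,B)$. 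Finally, since $\pi:\tilde M\to M$ is the $\Gamma$-quotient and $\Gamma$ acts on $Q$ only through $\tilde M$, the morphism $\hat\pi$ identifies $Q/\Gamma$ with $P$; the connection $B=\hat\pi^*A$ is $\Gamma$-invariant and descends exactly to $A$. This yields the required isomorphism between the $\Gamma$-quotient of $(Q,B)$ and $(P,A)$.

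The hard part is Step 2. The delicate points are: formulating the infinitesimal-homogeneity invariant of the pair so that its stabilization order really does control the jet-equivalence of $(\tilde g,F_B)$, hence the existence and uniqueness of $B$-preserving local isometries; proving that the Nomizu continuation of such an isometry along paths is globally well defined on $\tilde M$, which is exactly where completeness and simple connectivity intervene; and checking that the group of continued automorphisms, after closure in $\Iso(\tilde M,\tilde g)$, still acts transitively and admits a surjection from ${\cal G}_G^B(Q)$.
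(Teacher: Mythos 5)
The architecture of your proposal (pull back to $\tilde M$, prove a bundle analogue of Singer's theorem, then use the tautological deck lifts) matches the intended structure: indeed, Steps 1 and 3 are correct and essentially routine — the local homogeneity of $A$ does lift to $(Q,B)$ because $\pi$ is an étale local isometry, the lifts $\hat\gamma$ satisfy $\hat\gamma^*B=\hat\gamma^*\hat\pi^*A=B$, and $\hat\pi$ identifies $Q/\Gamma$ with $P$ carrying $A$. Note, however, that this article does not prove the theorem at all; it quotes it from \cite{Ba2}, and the substance of that reference (and of the extension theorems in \cite{Ba1}) is precisely your Step 2. As written, your Step 2 is a programme rather than a proof: the three ingredients you list — a finite-order ``Singer invariant'' of the pair $(\tilde g,F_B)$ controlling jet-equivalence, the Nomizu-type continuation of a local connection-preserving isometric bundle map along geodesics with path-independence from simple connectivity, and transitivity plus closedness of the resulting group $G$ — are exactly the content of the theorem, and you state them, flag them as delicate, and do not establish any of them. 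In particular the closedness of the image $G\subset\Iso(\tilde M,\tilde g)$ is only asserted (``a properness argument''), and if one passes to the closure one must still show that limit isometries admit $B$-preserving lifts, which needs an argument (compactness of $K$ and of the stabilizer enters here).

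There is also a substantive imprecision in the one concrete claim you do make in Step 2: a $B$-preserving local isometric bundle isomorphism is \emph{not} determined by a finite jet of $(\tilde g,F_B)$ at a point — it is determined at best up to the stabilizer ${\cal G}^B(Q)$ (the centralizer of the holonomy, which contains $Z(K)$ and may be positive-dimensional), and $F_B$ must be treated as an $\ad(Q)$-valued form compared through bundle maps, not as a fixed tensor on $\tilde M$. This ambiguity is not a technicality: it is exactly why the correct conclusion is that each $g\in G$ merely admits \emph{some} lift in ${\cal G}_G^B(Q)$ (equivalently $[B]$ is $G$-invariant) and why the quotient pair is homogeneous only under the extension $\hat G={\cal G}_G^B(Q)$ of $G$ by ${\cal G}^B(Q)$, as stressed in Remark \ref{difficulty}; a uniqueness claim at the level of jets would wrongly suggest a genuine $G$-action on $(Q,B)$. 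So the proposal correctly reduces the theorem to a bundle version of the Singer--Nomizu extension theorem and handles the formal bookkeeping around it, but the central analytic/geometric step — formulating the invariant for the pair, proving finite stabilization, and carrying out the global continuation — is missing, and this is the part the theorem is actually about.
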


We can interpret Theorem \ref{mainIntro} as follows: any pair $(P,A)$ consisting of a principal $K$-bundle on $M$ and a LH connection on $P$ can be obtained as the $\Gamma$-quotient of a homogeneous pair $(Q, B)$ on the universal cover $\tilde M$.

We will denote by $\hat G$ the group ${\cal G}_G^B(Q)$ given by Theorem \ref{mainIntro}  to save on notations; it is a Lie group and its definition yields the short exact sequence 
\begin{equation}\label{FirstExSeq}
\{1\}\to {\cal G}^B(Q)\to \hat G\textmap{\pg_B} G\to \{1\},
\end{equation}
whose left hand term ${\cal G}^B(Q)$ is  the stabilizer of the connection $B$ with respect to the action of the gauge group ${\cal G}(Q)$ on the space of connections ${\cal A}(Q)$. Fixing a point $y_0\in Q$ gives an isomorphism between ${\cal G}^B(Q)$ and a closed subgroup $L$ of $K$, namely the centraliser in $K$ of the holonomy group of $B$ \cite[Lemma 4.2.8]{DK}.  The  conjugacy class  of $L$ is independent of $y_0$.    
Note that  $B$ is irreducible if and only if its stabilizer  coincides with center $Z(K)$ of $K$.

The obtained pair $(Q,B)$ is $\hat G$-homogeneous \cite{BiTe}. Therefore one can use \cite[Theorem 2.10]{BiTe} which gives, in full generality, an explicit  description of the moduli space of homogeneous connections on a given homogeneous space. 


In this way we obtain a general, effective method for the classification of LH connections on a given LH connected, Riemannian manifold $(M,g)$:

\begin{enumerate}
\item Determine all closed subgroups $G\subset \Iso(\tilde g,\tilde M)$ which contain  $\Gamma$ and act  transitively on the universal cover $\tilde M$. Note that, if $G$ acts transitively on $\tilde M$,  also does its identity component $G_0$. Moreover,  in many interesting situations  $G_0$ contains $\Gamma$, so one can assume that $G$ is connected.

\item For a  subgroup $G\subset \Iso(\tilde g,\tilde M)$ as above classify the pairs $(\hat G, \jg)$ consisting of a Lie group extension $\hat G$ of $G$  by a closed subgroup $L$ of $K$, and a lift $\jg:\Gamma\to\hat G$ of the monomorphism $\iota_\Gamma: \Gamma\hookrightarrow G$.  For  such a pair, classify the $\hat G$-homogeneous pairs $(Q,B)$ on $\tilde M$  using  \cite[Theorem 2.10]{BiTe}. Select the $\hat G$-homogeneous pairs $(Q,B)$ whose stabilizer is $L$, and such that $\hat G$ acts effectively on $Q$. 
\end{enumerate}

Note that, in fact, only very few conjugacy classes of subgroups $L\subset K$ may occur. Indeed, if $L$ is the stabilizer of a $K$-connection, then it coincides with the centralizer of  a subgroup $H\subset K$. This is a very restrictive condition (see \cite[Ch. 1, Section 3]{Ba2}). 
For any Lie group extension $\hat G$ as above we obtain a space of triples $(Q,B,\jg)$, where $(Q,B)$ is a $\hat G$-homogeneous pair, and  $\jg:\Gamma\to\hat G$ is a lift of $\iota_\Gamma$ to $\hat G$.  Any such triple yields  a pair $(P,A)$ consisting of a principal $K$-bundle on $M$ and a LH connection   on $P$ (obtained as the $\Gamma$-quotient of $(Q,B)$).   

By Theorem  \ref{mainIntro} we know that in this way we obtain all LH connections on $K$-bundles on $M$. 
\subsection{Classification Theorems}
We will use the general method explained above  to classify  the  locally homogeneous connections on $\S^1$ and $\PU(2)$-bundles on hyperbolic Riemann surfaces.

Consider first the case $K=\S^1$.  An $\S^1$-connection on a  surface endowed with  a constant curvature metric is LH if and only if it is Yang-Mills \cite[Ch. 4, Proposition 2.1]{Ba2}.  For any   $t\in \R$ we will define:
\begin{enumerate}
\item 	A Lie group extension 
$$\{1\}\to \S^1\to\hat G_t\to \PSL(2,\R)\to\{1\} 
$$
of $ \PSL(2,\R)$ by $\S^1$.
\item A morphism $\chi_t:\hat H_t\to \S^1$, where  $\hat H_t\subset \hat G_t$ is the stabilizer of the point $x_0=i\in\H^2$,  and a $\hat G_t$-invariant connection $A_{t}$ on the associated principal $\S^1$-bundle  $\hat G_t\times_{\chi_t}\S^1\to\H^2$. 
\end{enumerate}
Using these objects we will prove the following result, which is stated without proof in \cite[Ch. 4, Proposition 2.2]{Ba2}:
\begin{thry} \label{S1classThintro} Let $M:=\H^2/\Gamma$ be a compact, hyperbolic Riemann surface, where $\Gamma$ is a discrete subgroup of $\PSL(2,\R)$.  
For a   principal $\S^1$-bundle  $P$  on $M$ put $t:= \frac{c_1(P)}{2(1-g)}$. 
For any LH connection  $A$ on $P$ there exists a unique lift $\jg:\Gamma\to \hat G_t$ of the monomorphism $\Gamma\hookrightarrow\PSL(2,\R)$ such that
$$(P,A)\simeq \qmod{(\hat G_t\times_{\chi_t}\S^1,A_{t})}{\Gamma}\,,
$$
where $\Gamma$ acts on $\hat G_t\times_{\chi_t}\S^1$ via $\jg$.   
\end{thry}

Moreover, varying the lift $\jg$ in the space $\Jg$ of $\hat G_t$-valued lifts of $\Gamma\hookrightarrow\PSL(2,\R)$, we obtain a parametrization of the torus of gauge classes of Yang-Mills $\S^1$-connections on $P$. This gives a new and explicit construction of this Yang-Mills  moduli space.

The classification of locally homogeneous $\PU(2)$-connections on a hyperbolic Riemann surface $(M,g)$ is obtained taking into account the stabilizer of the pull-back connection on the universal cover $\H^2$. The most interesting case is the one of LH connections with irreducible pull-back to $\H^2$. Let $H$ be the stabilizer of the point $x_0=i\in\H^2$ with respect to the standard $\PSL(2,\R)$-action on $\H^2$.  We will define a Lie group monomorphism $\chi_1:H\to \PU(2)$ and for any $z\in\C$, a $\PSL(2,\R)$-invariant connection $A_z$ on the associated bundle  $P_{\chi_1}:=\PSL(2,\R)\times_{\chi_1}\PU(2)$. We will prove (see Proposition 2.3 \cite{Ba2} and Theorem \ref{classPU(2)irr} in this article):

\begin{thry} \label{classPU(2)irrintro} Let $(M,g)$ be the hyperbolic compact Riemann surface, where $M=\H^2/\Gamma$ for a discrete subgroup $\Gamma\subset \PSL(2,\R)$ acting properly discontinuously on $\H^2$. Let  $P$ be a principal $\PU(2)$-bundle on $M$, and $A$ be a LH connection on $P$ whose  pull-back  to $\H^2$ is irreducible.  There exists a unique $r>0$ for which the pair  $(P,A)$ is isomorphic to the $\Gamma$-quotient of $(P_{\chi_1},A_{r})$ by $\Gamma$.\end{thry}

Therefore the moduli space of locally homogeneous $\PU(2)$-connections on $(M,g)$ (with irreducible pull-back to $\H$) can be naturally identified with $(0,\infty)$. This result yields a family of compact geometric 5-manifolds which are $\SO(3)$-bundles over  hyperbolic Riemann surfaces.

\begin{re} For any $r\in (0,\infty)$ we obtain a $\PU(2)$-invariant, LH Riemannian metric $g_r$ on the quotient bundle   $Q=P_{\chi_1}/\Gamma$, which makes the bundle projection $Q\to M$ a Riemannian submersion.
\end{re}

Our one parameter family of LH connections can be obtained explicitly in a geometric way using a foliation by umbilic surfaces of the cylinder $M\times\R$ endowed with a hyperbolic metric (see Proposition \ref{hypcyl}):
 \begin{pr}\label{hypcylintro}
There exists a hyperbolic metric on $\M:=\R\times M$ 
such that, putting $M_t:=\{t\}\times M$ and denoting by $\B\in{\cal A}(\SO(\M))$ the Levi-Civita connection of $\M$, one has
\begin{enumerate}
\item 	$M_0$ is totally geodesic and hyperbolic.
\item For any  $x\in M$ the path $t\mapsto (t,x)$ is a geodesic of $\M$. 
 \item  For  any $t\in\R$ the surface $M_t$ is   umbilic  in $\M$ and the restriction
 $$(\resto{\SO(\M)}{M_t}, \resto{\B}{M_t})$$
 is isomorphic to the $\Gamma$-quotient of $(P_{\chi_1},A_{\sinh(t)})$.
\end{enumerate}
	
\end{pr}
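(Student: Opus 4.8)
\emph{Proof strategy.} The plan is to realise $(\M,g_\M)$ as the quotient of the Fermi--coordinate model of $\H^3$ around a totally geodesic copy of $\H^2$, and then to identify the restricted frame bundles with the homogeneous model $P_{\chi_1}$ by a Lie-theoretic computation. First I would put on $\R\times\H^2$ the warped-product metric $\tilde g=dt^2+\cosh^2(t)\,g_{\H^2}$, where $g_{\H^2}$ is the curvature $-1$ metric; this is exactly the metric obtained by pulling back the hyperbolic metric of $\H^3$ via the normal exponential map of a totally geodesic plane $\Pi\cong\H^2\subset\H^3$ (the relevant Jacobi fields grow like $\cosh t$), and since $\H^3$ is a Hadamard manifold this normal exponential map is a diffeomorphism $\R\times\H^2\xrightarrow{\sim}\H^3$ sending $\{0\}\times\H^2$ to $\Pi$ and $\{t\}\times\H^2$ to the equidistant surface at signed distance $t$. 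The inclusion $\PSL(2,\R)=\Iso^+(\H^2,g_{\H^2})\hookrightarrow\PSL(2,\C)=\Iso^+(\H^3)$ (extension of Möbius transformations, preserving $\Pi$ and each side of it) acts on $\R\times\H^2$ by $\gamma\cdot(t,x)=(t,\gamma x)$, hence preserves $\tilde g$; restricting to $\Gamma$, which thus acts freely and properly discontinuously, yields a complete hyperbolic metric $g_\M$ on $\M=\R\times M$, and the slices and normal lines descend to $M_t$ and to the paths $t\mapsto(t,x)$.

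Properties (1) and (2) are then standard warped-product facts: for $\tilde g=dt^2+f(t)^2 g_{\H^2}$ one has $\nabla_{\partial_t}\partial_t=0$, so each line $t\mapsto(t,x)$ is a unit-speed geodesic; and the slice $\{t_0\}\times\H^2$ is totally geodesic iff $f'(t_0)=0$, which for $f=\cosh$ occurs exactly at $t_0=0$, where the induced metric is $g_{\H^2}$ itself --- so $M_0$ is totally geodesic and hyperbolic. For (3), the shape operator of $\{t\}\times\H^2$ with respect to $\partial_t$ is $\nabla\partial_t=\tfrac{f'(t)}{f(t)}\mathrm{Id}=\tanh(t)\,\mathrm{Id}$, so every slice, hence every $M_t$, is umbilic.

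The heart of the argument is the isomorphism $(\resto{\SO(\M)}{M_t},\resto{\B}{M_t})\simeq$ the $\Gamma$-quotient of $(P_{\chi_1},A_{\sinh(t)})$. Since frame bundles and Levi--Civita connections are local and natural, it suffices to establish this $\Gamma$-equivariantly on the universal cover, i.e. to identify $(\resto{\SO(\H^3)}{\{t\}\times\H^2},\resto{\B}{\{t\}\times\H^2})$, a $\PU(2)\cong\SO(3)$-bundle with connection over $\H^2$, with $(P_{\chi_1},A_{\sinh(t)})$. Here I would use that $\Iso^+(\H^3)=\PSL(2,\C)$ acts simply transitively on the oriented orthonormal frame bundle, so $\SO(\H^3)\cong\PSL(2,\C)$ (as a $\PSU(2)\cong\PU(2)$-bundle over $\H^3=\PSL(2,\C)/\PSU(2)$), and that under this identification $\B$ is the canonical connection of the Cartan decomposition $\mathfrak{sl}(2,\C)=\mathfrak{su}(2)\oplus\mathfrak{p}$, which coincides with the Levi--Civita connection because $\H^3$ is symmetric. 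Restricting to the slice $\{t\}\times\H^2$ and to the $\PSL(2,\R)$-orbit of a suitably chosen adapted frame $f_t$ over $p_t=(t,i)$ --- namely $f_t=g_t\cdot f_0$, where $g_t\in\PSL(2,\C)$ is the hyperbolic translation of length $t$ along the geodesic normal to $\Pi$ at $i$, so that $f_t$ has third leg the unit normal --- exhibits an $\SO(2)$-reduction and identifies $\resto{\SO(\H^3)}{\{t\}\times\H^2}\cong\PSL(2,\R)\times_{\chi_1}\PU(2)=P_{\chi_1}$ as $\PSL(2,\R)$-bundles (the isotropy representation of the stabiliser $H$ of $i$ on the adapted frame being identified with $\chi_1$), hence $\Gamma$-equivariantly. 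Pulling back $\B$ along $g\mapsto g\cdot f_t$ gives a left-invariant $\mathfrak{pu}(2)$-valued form on $\PSL(2,\R)$, i.e. a Wang map $\Lambda_t\colon\mathfrak{sl}(2,\R)\to\mathfrak{pu}(2)$ which on the isotropy $\mathfrak{h}$ equals $d\chi_1$ and on a reductive complement $\mathfrak{m}$ equals $X\mapsto\mathrm{pr}_{\mathfrak{su}(2)}(\mathrm{Ad}_{g_t^{-1}}X)$; a direct $2\times 2$ matrix computation gives $\Lambda_t|_{\mathfrak{m}}=\sinh(t)\cdot\Lambda_1|_{\mathfrak{m}}$, so by Wang's theorem the invariant connection obtained is $A_{\sinh(t)}$, and taking $\Gamma$-quotients finishes the proof.

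The main obstacle is this last matching step: one must pin down the normalisations of $\chi_1$ and of the family $\{A_z\}_{z\in\C}$ from their definitions given earlier in the paper, and check that $\mathrm{pr}_{\mathfrak{su}(2)}\circ\mathrm{Ad}_{g_t^{-1}}$ on $\mathfrak{m}$ is precisely the Wang map of $A_{\sinh(t)}$ and not some reparametrisation. One must be careful here that the reductive decomposition $\mathfrak{sl}(2,\R)=\mathfrak{h}\oplus\mathfrak{m}$ is adapted to the \emph{intrinsic} metric $g_{\H^2}$, while $f_t$ is orthonormal for the \emph{induced} metric $\cosh^2(t)\,g_{\H^2}$ on the slice; this rescaling by $\cosh^2 t$ is exactly what converts the extrinsic factor $\tanh t$ (the principal curvature) into the parameter $\sinh t=\tanh t\cdot\cosh t$ appearing in the statement, which is a reassuring consistency check. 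Everything else --- freeness and proper discontinuity of the $\Gamma$-action, completeness of $g_\M$, and the warped-product formulas --- is routine.
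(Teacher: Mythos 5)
Your proof is correct, and its skeleton is the same as the paper's: the paper also realises $\M$ as the quotient of $\H^3$, foliated by the surfaces $H_t$ equidistant from a totally geodesic copy of $\H^2$, by the image of $\Gamma$ under the extension--along--normal--geodesics monomorphism $\Iso(\H^2,g_{\H^2})\to \Iso(\H^3,g_{\H^3})$, pulled back through the diffeomorphism $(t,x)\mapsto\varphi_t(x)$; your warped--product model $dt^2+\cosh^2(t)\,g_{\H^2}$ and the paper's flow $f^t$ of Section \ref{umbilic} are two descriptions of the same object, and items (1), (2) and the umbilicity in (3) come out the same way in both treatments. Where you genuinely diverge is the identification of $(\resto{\SO(\M)}{M_t},\resto{\B}{M_t})$ with the $\Gamma$-quotient of $(P_{\chi_1},A_{\sinh (t)})=(P_{\theta_1\circ\sigma},A_{\theta_1\circ\sigma,\sinh(t)\mu_0})$: the paper invokes Proposition \ref{GeomInt}, proved by splitting $\resto{T_{\H^3}}{H_t}=T_{H_t}\oplus N_{H_t}$, comparing the block decomposition of the restricted Levi-Civita connection with formula (\ref{nablaz}), and matching second fundamental forms via Remark \ref{Bz} (the rescaling $\tanh t\cdot\cosh t=\sinh t$ that you present as a consistency check is precisely the content of that matching), whereas you rederive the identification Lie-theoretically, via $\SO(\H^3)\simeq\PSL(2,\C)$, the canonical connection of the symmetric pair, and the computation of $\mathrm{pr}_{\su(2)}\circ\Ad_{g_t^{-1}}$ on the reductive complement. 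Both routes work: yours is self-contained, manifestly $\PSL(2,\R)$-equivariant (which is exactly what legitimises passing to $\Gamma$-quotients, a point the paper leaves implicit in ``the claim follows''), and it absorbs the intrinsic-versus-induced-metric rescaling into the adjoint computation; the paper's route is shorter because the work was already done in Section \ref{umbilic}, and it explains the parameter $\sinh t$ directly as second-fundamental-form data. One small slip to fix: the identity $\Lambda_t|_{\mathfrak m}=\sinh(t)\cdot\Lambda_1|_{\mathfrak m}$ cannot be meant literally (set $t=1$); what the $2\times2$ computation gives, for a suitably normalised adapted frame, is $\Lambda_t|_{\mathfrak m}=\sinh(t)\,\mu_0$ up to the allowed $\PU(2)$-conjugation, i.e.\ exactly the equivariant map defining $A_{\theta_1\circ\sigma,\sinh(t)\mu_0}$, which is what you need.
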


Therefore all locally homogeneous  $\PU(2)$-connections on $M$ with irreducible pull-back to $\H$  can be obtained, up to equivalence, by restricting the Levi-Civita connection of the hyperbolic cylinder $\M$ to the umbilic leaves $M_t$ for $t\in (0,\infty)$.

We obtain a clear geometric interpretation of our LH Riemannian 5-manifolds: $(Q,g_r)$ can be identified with the  restriction of the frame bundle $\SO(\M)$ (endowed with its Levi-Civita connection and the associated bundle metric) to the umbilic leaf $M_t$.

This result  illustrates our general principle explained above: studying moduli spaces of LH connections on principal bundles leads to new classes of compact LH Riemannian manifolds. We conclude with several general interesting  problems concerning these manifolds:

\begin{enumerate} 
\item Classify  the  LH Riemannian manifolds obtained in this way which are Einstein. 
\item What is the behavior 	of the Ricci flow with initial condition $g_0=g_A$.
\item Give alternative geometric interpretations of these Riemannian manifolds  and their moduli spaces (using appropriate generalizations of Proposition \ref{hypcylintro}).
\end{enumerate}

\section{Homogeneous connections on principal bundles}

\subsection{General theory} \label{genth}

In this section we present briefly the main result of \cite{BiTe} on the classification of $G$-homogeneous connections on a $G$-manifold. We begin with  

\begin{dt}\cite[p. 30]{Ya} \cite[Example 4, p. 165]{SaWa}
A pair $(G,H)$ consisting of a Lie group $G$ and a closed subgroup $H\subset G$ is called reductive if $\hg$ has an $\ad_H$-invariant complement in $\g$.	
\end{dt}

Note that if $H$ is compact in $G$, then $(G,H)$ is automatically reductive. 
\vspace{2mm}

Fix now a reductive pair $(G,H)$ and an $\ad_H$-invariant complement $\sg$ of $\hg$ in $\g$. Applying left translations to $\sg$ we obtain a left invariant connection $A^\sg$ on the principal $H$-bundle $G\to G/H$.

Let $\alpha:G\times X\to X$ be a transitive smooth action of a connected Lie group $G$ on a connected differentiable $n$-manifold $X$. Fixing a point $x_0\in X$ defines an identification $G/H\textmap{\simeq} X$,  where $H$ is the stabiliser of $x_0$ with respect to $\alpha$.  The map $\pi_{x_0}:G\to X$ given by $\pi_{x_0}(g)= gx_0$ is a principal $H$-bundle on $X$. We assume that the pair $(G,H)$ is reductive, and let  $\sg$ be  an $\ad_H$-invariant complement of  $\hg$ in $\g$. We denote by $A^\sg$ the corresponding left $G$-invariant connection on the bundle  $\pi_{x_0}$.

Let now $K$ be a Lie group. Our problem is the classification of triples  $(P,A,\beta)$, where $P$ is principal $K$-bundle on $X$, $A$ is a connection on $P$, and $\beta:G\times P\to P$ an $\alpha$-covering $G$-action on $P$ by  bundle isomorphisms which preserve $A$ (see \cite{BiTe}, \cite{Ba2} for details).
In the terminology of \cite{BiTe} a  triple $(P,A,\beta)$ as above is  called a  $G$-homogeneous $K$-connection on $X$.

Note that    $G$ acts in a natural way on the set of all gauge classes of $K$-connections (on principal $K$-bundles) on $X$ \cite{BiTe}.  The gauge class $[A]$ corresponding to a $G$-homogeneous $K$-connection  $(P,A,\beta)$ is obviously $G$-invariant. Unfortunately, in general, a $G$-invariant gauge class $[A]$ is not necessarily  associated with  a $G$-homogeneous $K$-connection.

Let  $\Phi_{\alpha,K}$  be the set of isomorphism classes  of $G$-homogeneous $K$-connections on $X$. Following \cite{BiTe} we put
$${\cal A}(G,H,K):=\{(\chi,\mu)\in\Hom(H,K)\times\Hom(\g/\hg,\kg):\mu\circ\ad_h=\ad_{\chi(h)}\circ \mu, \forall h\in H   \},
$$
$${\cal M}(G,H,K):=\qmod{{\cal A}(G,H,K)}{K}\,.
$$
In the second definition  the $K$-action on $\Hom(H,K)\times\Hom(\g/\hg,\kg)$ is given by 
$$k\cdot (\chi,\mu)=(\iota_k\circ \chi,\ad_k\circ \mu)\,.$$
Therefore the moduli space ${\cal M}(G,H,K)$ is the $K$-quotient of  the $K$-invariant real algebraic  variety ${\cal A}(G,H,K)\subset \Hom(H,K)\times\Hom(\g/\hg,\kg)$.
\begin{thry}\label{ThBiTe}\cite{BiTe}
Let $\alpha:G\times X\to X$ be a transitive action of $G$ on $X$. Let $H\subset G$ be the stabiliser of a fixed point $x_0\in X$, and suppose that the pair $(G,H)$ is reductive. One has a natural identification ${\cal M}(G,H,K)\to \Phi_{\alpha,K}$.

\end{thry}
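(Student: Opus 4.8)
The plan is to construct a pair of mutually inverse maps between $\Phi_{\alpha,K}$ and ${\cal M}(G,H,K)$, the whole argument resting on one structural fact: every $G$-homogeneous principal $K$-bundle over $N$ is, $G$-equivariantly, an associated bundle of $\pi_{x_0}\colon G\to N$. Throughout I fix an $\ad_H$-invariant complement $\sg$ of $\hg$ in $\g$, provided by reductivity, and identify it with $\g/\hg$. For the map ${\cal M}(G,H,K)\to\Phi_{\alpha,K}$, given a representative $(\chi,\mu)\in{\cal A}(G,H,K)$ I form the associated bundle $P_\chi\coloneqq G\times_\chi K$ over $G/H\cong N$, with $G$ acting by left translation on the first factor; this action $\beta$ covers $\alpha$ and acts by $K$-bundle isomorphisms. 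Define $\Lambda_{\chi,\mu}\colon\g=\hg\oplus\sg\to\kg$ by $\Lambda_{\chi,\mu}|_{\hg}=d\chi$ and $\Lambda_{\chi,\mu}|_{\sg}=\mu$. The relation defining ${\cal A}(G,H,K)$ is exactly the statement that $\Lambda_{\chi,\mu}$ is $\ad_H$-equivariant: on $\hg$ this is automatic since $\chi$ is a homomorphism, and on the $\ad_H$-invariant subspace $\sg$ it is the identity $\mu\circ\ad_h=\ad_{\chi(h)}\circ\mu$. By the classical classification of $G$-invariant connections on bundles associated with $G\to G/H$, such a map determines a unique $G$-invariant connection $A_{\chi,\mu}$ on $P_\chi$, so $(P_\chi,A_{\chi,\mu},\beta)$ is a $G$-homogeneous $K$-connection, and one checks that isomorphic pairs $(\chi,\mu)$ yield isomorphic triples.

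For the map $\Phi_{\alpha,K}\to{\cal M}(G,H,K)$, given $(P,A,\beta)$ I choose a point $y_0$ in the fibre $P_{x_0}$. For $h\in H$, the automorphism $\beta_h$ maps $P_{x_0}$ to itself and is $K$-equivariant on it, so $\beta_h(y_0)=y_0\cdot\chi(h)$ for a unique $\chi(h)\in K$; the identity $\beta_{h_1h_2}=\beta_{h_1}\circ\beta_{h_2}$ forces $\chi\in\Hom(H,K)$. The assignment $[g,k]\mapsto\beta_g(y_0)\cdot k$ is well defined (this is precisely the content of the definition of $\chi$), is $G$- and $K$-equivariant, and is therefore a $G$-equivariant isomorphism of principal $K$-bundles $P_\chi\cong P$. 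Transporting $A$ to $P_\chi$ produces a $G$-invariant connection, hence a linear map $\Lambda\colon\g\to\kg$ with $\Lambda|_{\hg}=d\chi$; setting $\mu\coloneqq\Lambda|_{\sg}$ gives $(\chi,\mu)\in{\cal A}(G,H,K)$. The only choice made is $y_0$, unique up to right translation by $K$, and replacing $y_0$ by $y_0k$ replaces $(\chi,\mu)$ by another point of its $K$-orbit; hence the class in ${\cal M}(G,H,K)$ is well defined, and since an isomorphism of $G$-homogeneous $K$-connections carries one admissible base point to another, the induced map $\Phi_{\alpha,K}\to{\cal M}(G,H,K)$ is well defined.

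It remains to see the two maps are mutually inverse. Starting from $(\chi,\mu)$ and applying the second construction to $(P_\chi,A_{\chi,\mu},\beta)$ with the base point $y_0=[e,1_K]$ returns exactly $(\chi,\mu)$. Conversely, for a given $(P,A,\beta)$ the $G$-equivariant, connection-preserving isomorphism $P_\chi\cong P$ built above exhibits $(P,A,\beta)$ as isomorphic to the triple reconstructed from its invariants. Thus the two maps are mutually inverse bijections, and their naturality (in $G$, $N$ and $K$) is evident from the constructions.

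The heart of the matter is the structural step: showing that a $G$-homogeneous principal $K$-bundle, together with its $G$-action, is $G$-equivariantly isomorphic to some $G\times_\chi K$, combined with the classical correspondence between $G$-invariant connections on $G\times_\chi K$ and $\ad_H$-equivariant linear extensions $\g\to\kg$ of $d\chi$. Reductivity of $(G,H)$ enters precisely to split $\g=\hg\oplus\sg$ and thereby to recognize such an extension as a free parameter $\mu\in\Hom(\g/\hg,\kg)$ compatible with $\chi$. Everything else — the cocycle identities, the well-definedness of the associated-bundle map, and the residual $K$-action coming from the choice of base point in the fibre — is routine bookkeeping.
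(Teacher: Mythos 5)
Your argument is correct: it is the standard Wang-type correspondence (homogeneous bundles are associated bundles $G\times_\chi K$, invariant connections correspond to $\ad_H$-equivariant extensions of $d\chi$, with reductivity splitting off $\mu$ and the choice of base point $y_0$ accounting exactly for the $K$-action defining ${\cal M}(G,H,K)$). The paper itself gives no proof of this theorem --- it is quoted from \cite{BiTe} and only the explicit identification $(\chi,\mu)\mapsto(P_\chi,A_{\chi,\mu})$ is described --- and your construction coincides with that identification, so your route is essentially the same as the cited source's.
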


The identification ${\cal M}(G,H,K)\to \Phi_{\alpha,K}$ mentioned  Theorem  \ref{ThBiTe} can be described explicitly as follows.  Given $(\chi,\mu)\in {\cal A}(G,H,K)$ we construct  a principal $K$-bundle $P_\chi$ with a natural $\alpha$-covering $G$-action by bundle isomorphisms, and a $G$-invariant connection $A_{\chi,\mu}$ on $P_\chi$. The pair $(P_\chi,A_{\chi,\mu})$ is  constructed  as follows:
\begin{itemize}

\item  $P_\chi$  is just the  principal bundle 
$$\pi_{x_0}^\chi:G\times_\chi K\to X$$
associated with the pair $(\pi_{x_0},\chi)$. This bundle comes with 
\begin{itemize}
\item 	a distinguished point $y_0:=[e_G,e_K]\in (P_{\chi})_{x_0}$.
\item  an obvious bundle morphism 
$$
\begin{tikzcd}
G\ar[dr, "\pi_{x_0}"'] \ar[rr, "\rho_\chi"]&&	P_\chi \ar[dl, "\pi_{x_0}^\chi"]\\
&X&
\end{tikzcd}
$$
of type $\chi$ mapping $e_G$ to $y_0$.
\end{itemize}
\item The connection $A_{\chi,\mu}$ is defined by
\begin{equation}\label{Achimu} 
A_{\chi,\mu}:=(\rho_\chi)_*(A^\sg)+a_\mu,
\end{equation}
where $a_\mu$ is the unique $G$-invariant tensorial $\kg$-valued 1-form on $P_\chi$ satisfying
\begin{equation}\label{alphamu}
\rho_\chi^*(a_\mu)_{e_G}=\mu.
\end{equation}
\end{itemize}

The curvature $F_{A_{\chi,\mu}}$ of the connection $A_{\chi,\mu}$ is the unique $G$-invariant tensorial $\kg$-valued 2-form on $P_\chi$ satisfying the identity
\begin{equation}\label{curvAchimu}
\rho_\chi^*(F_{A_{\chi,\mu}})_{e_G}(u,v)=-\chi_*([u,v]^{\hg})+[\mu(u),\mu(v)]-\mu([u,v]).
\end{equation}
The right hand side of (\ref{curvAchimu})  defines a skew-symmetric  bilinear 2-form
$$\Phi_{\chi,\mu}:\g\times\g\to \kg
$$
which has an interesting  interpretation: Let $\lambda_{\chi,\mu}:\g\to \kg$ be the linear form which coincides with $\chi_*$ on $\hg$ and with $\mu$ on $\sg$.  One has the identity
$$\Phi_{\chi,\mu}(u,v)=[\lambda_{\chi,\mu}(u),\lambda_{\chi,\mu}(v)]-\lambda_{\chi,\mu}([u,v]).
$$
Therefore $\Phi_{\chi,\mu}$ measures the failure of $\lambda_{\chi,\mu}$ to be a Lie algebra morphism. Formula (\ref{curvAchimu}) gives the curvature form at the point $y_0\in (P_\chi)_{x_0}$.  
\def\PSL{\mathrm{PSL}}
\def\sl{\mathrm{sl}}

\subsection{$\PSL(2,\R)$-homogeneous connections on the hyperbolic plane}
\label{Examples}

Homogeneous connections on the hyperbolic plane have been studied in \cite{Bi}, \cite{BiTe}. In this section we present briefly the main results. 

\subsubsection{An explicit description of the moduli space. The case $K$ Abelian}

Let 
$$\H^2=\{z\in \C|\ \Im(z)>0\}$$
be the hyperbolic plane endowed with its standard hyperbolic metric $g_{\H^2}$. We recall that the group of orientation preserving isometries of $(\H^2,g_{\H^2})$ can be identified with $G:=\PSL(2,\R)$. The stabiliser $H=G_{x_0}$ of $x_0=i\in \H^2$  with respect to the standard action
 $$\alpha:\PSL(2,\R)\times \H^2 \to \H^2
 $$
 coincides with the image of $\SO(2)$ in $\PSL(2,\R)$. 
Therefore
\begin{equation}\label{ht}
H=\bigg\{h_t:=\left[\left(\begin{matrix}\cos(t/2)&\sin(t/2)\\ -\sin(t/2)&\cos(t/2)\end{matrix}\right)\right]\vline\ t\in[0,2\pi]\bigg\}\subset \mathrm{PSL}(2,\R)\,.	
\end{equation}
Let $\sigma:H\to \S^1$ be the isomorphism $h_t\mapsto e^{it}$. Our (non-standard) way to parameterize the stabilizer  $H$ is justified by the following remark, which shows that $G$, regarded as a principal $\S^1$-bundle over $\H^2$ via the isomorphism $\sigma:H\to\S^1$, can be naturally identified with the frame bundle of the oriented Riemannian surface $\H^2$.
\begin{re}\label{delta}
Let $e_1^{x_0}\in T_{x_0}$ be the tangent vector defined by the first element of the standard basis of $\R^2$, and let $\S(T_{\H^2})$ be the circle bundle of $\H^2$, endowed with the $\S^1$-action defined by the complex orientation of $\H^2$. The map 
$$\delta: G\to \S(T_{\H^2}),\ \delta(\phi):=\phi_{*x_0}(e_1^{x_0})
$$
is a $\sigma$-equivariant diffeomorphism over $\H^2$. 

In other words, $\delta$ induces an isomorphism between the  principal bundle $G\times_\sigma\S^1$ and  the  principal $\S^1$-bundle $\S(T_{\H^2})$. The latter  can be obviously identified with the frame bundle $\SO(\H^2)$ of the oriented Riemannian surface $\H^2$. 
\end{re}

Identifying $\g$ with $\sl(2,\R)$, we have
$\hg=\R \left(\begin{matrix} 0&\frac{1}{2}\cr -\frac{1}{2}&0\end{matrix}\right),
$
and an $\ad_H$-invariant complement of $\hg$ in $\g$ is
$$\sg:=\left\langle \frac{1}{2}\left(\begin{matrix}0&1\cr 1 & 0\end{matrix}\right), \frac{1}{2}\left(\begin{matrix} 1&0\cr 0&-1\end{matrix}\right) \right\rangle .
$$
Put
$$b_1:=\frac{1}{2}\left(\begin{matrix}0&1\cr 1& 0\end{matrix}\right),\ b_2:=\frac{1}{2}\left(\begin{matrix} 1&0\cr 0&-1\end{matrix}\right),\ b_3:=\frac{1}{2}\left(\begin{matrix} 0& 1\cr -1&0\end{matrix}\right).
$$
We have
$$ [b_1,b_2]=-b_3,\ \  [b_2,b_3]=b_1,\ [b_3,b_1]=b_2.
$$
The action $\ad_H$ on $\sg$ is given by 
$$\ad_{h_t}(u_1b_1+u_2b_2)=(b_1,b_2)\left(\begin{matrix} \cos(t)&-\sin(t)\cr \sin(t)&\cos(t)\end{matrix}\right)\begin{pmatrix} u_1\\ u_2\end{pmatrix},
$$
so $\ad_{h_t}$ acts on the oriented plane $\sg=\langle b_1, b_2\rangle $ by a rotation of angle $t$. \\

For a  Lie group $K$   we  obtain an identification
\begin{equation}
 {\cal A}(\PSL(2,\R),H,K)=\{(\chi,\mu)\in \Hom(H,K)\times \Hom_\R(\sg,\kg)|\ 
  \mu\in \Hom_{\S^1}^{\chi\circ\sigma'}(\sg,\kg)\},	
 \end{equation}
where $\sigma':=\sigma^{-1}$, and the subspace $\Hom_{\S^1}^{\chi\circ\sigma'}(\sg,\kg)\subset \Hom(\sg,\kg)$ is defined by 
$$\Hom_{\S^1}^{\chi\circ \sigma'}(\sg,\kg):=\{\mu\in \Hom_\R(\sg,\kg)|\ \mu\circ R_{\zeta
}=\ad_{\chi\circ \sigma'(\zeta)}\mu\ \ \forall \zeta\in \S^1\}\,.
$$
In this formula we used the notation
$$R_{e^{it}} (u_1b_1+u_2b_2)=\ad_{h_t}(u_1b_1+u_2b_2)=(b_1,b_2)\left(\begin{matrix} \cos(t)&-\sin(t)\cr \sin(t)&\cos(t)\end{matrix}\right)\begin{pmatrix} u_1\\ u_2\end{pmatrix}\,.
$$

We  can identify $\Hom_\R(\sg,\kg)$ with the complexification $\kg^\C$ of $\kg$ using the isomorphism 
$$I:\Hom_\R(\sg,\kg)\to \kg^\C,\ I(\mu)=\mu(b_1)-i\mu(b_2).$$
%
 %
  Using the identity $I(\mu\circ R_{\zeta})=\zeta I(\mu)$ for $\zeta\in\S^1$, and putting 
$$\kg^\C_{\chi\circ\sigma'}:=\{Z\in \kg^\C|\ \ad_{\chi\circ\sigma'(\zeta)}(Z)=\zeta Z,\ \forall \zeta\in \S^1\},$$
we obtain a further identification (see \cite{Bi}):
\begin{equation}\label{kC}
{\cal A}(\PSL(2,\R),H,K)=\{(\chi,Z)\in \Hom(H,K)\times \kg^\C|\  Z\in \kg^\C_{\chi\circ\sigma'}
 \}\,.	
\end{equation}
Note that $\kg^\C_{\chi\circ\sigma'}\subset \kg^\C$ is just the weight  space associated with the  weight $\id_{\S^1}$ of  the $\S^1$-representation $\zeta\mapsto \ad_{\chi\circ\sigma'(\zeta)}\in \GL(\kg^\C)$. Formula (\ref{kC}) combined with Theorem  \ref{ThBiTe} yields a simple description of the moduli space of isomorphism classes of $G$-homogeneous $K$-connections on the hyperbolic space for an Abelian Lie group $K$: 

\begin{re}\label{Ab}
If $K$ is Abelian, then 	$\kg^\C_{\chi\circ\sigma'}=0$, so one has identifications
$$\Hom(H,K)\times\{0\}={\cal A}(\PSL(2,\R),H,K),$$
$$ \Hom(H,K)\textmap{\simeq} {\cal M}(\PSL(2,\R),H,K)\simeq \Phi_{\alpha,K} .$$
In particular, for $K=\S^1$ one has  natural identifications 
\begin{equation}\label{K=S1}
\Z\textmap{\simeq}{\cal M}(\PSL(2,\R),H,\S^1)\textmap{\simeq} \Phi_{\alpha,\S^1}	
\end{equation}
 given explicitly by 
 $$k\mapsto [\pi^k\circ\sigma,0]\mapsto [A_{\pi^k\circ\sigma,0}],$$
  where $\pi^k:\S^1\to \S^1$ is the Lie group morphism  $\zeta\mapsto \zeta^k$. 
\end{re}
\begin{co}\label{AsLC}
The direct image $\delta_*(A^\sg)$ coincides with the Levi-Civita connection $A_{\mathrm{LC}}$ on the $\S^1$-bundle $\S(T_{\H^2})=\SO(\H^2)$.
\end{co}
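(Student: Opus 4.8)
The statement to prove is Corollary \ref{AsLC}: that $\delta_*(A^\sg)$ equals the Levi-Civita connection $A^{\mathrm{LC}}$ on $\S(T_{\H^2}) = \SO(\H^2)$. The key point is that both connections are $G = \PSL(2,\R)$-invariant $\S^1$-connections on the same principal bundle, and by Remark \ref{Ab} (specifically the identification \eqref{K=S1}) the space $\Phi_{\alpha,\S^1}$ of isomorphism classes of $G$-homogeneous $\S^1$-connections on $\H^2$ is discrete, identified with $\Z$. So uniqueness is essentially automatic once one checks both connections live in the same fibre of this classification — but one must be a bit careful, since \eqref{K=S1} classifies up to \emph{isomorphism}, whereas here we want an honest equality on one fixed bundle. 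The cleaner route is to observe that a $G$-invariant connection on a fixed $G$-bundle, if it exists, is \emph{unique}: the difference of two such connections is a $G$-invariant tensorial $\ig\R$-valued $1$-form on $\S(T_{\H^2})$, hence descends to a $G$-invariant $1$-form on $\H^2$; but $G$ acts transitively with isotropy $H \cong \SO(2)$ acting on $T_{x_0}\H^2$ by rotation, which has no nonzero fixed covectors, so the form vanishes.

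With uniqueness in hand, the plan is: (i) recall that under the identification $\delta: G\times_\sigma \S^1 \xrightarrow{\simeq} \S(T_{\H^2}) = \SO(\H^2)$ of Remark \ref{delta}, the connection $\delta_*(A^\sg)$ is a $G$-invariant connection, because $A^\sg$ is $G$-invariant on $\pi_{x_0}: G\to \H^2$ and $\delta$ is a $G$-equivariant bundle isomorphism; (ii) recall that the Levi-Civita connection $A^{\mathrm{LC}}$ on $\SO(\H^2)$ is likewise $G$-invariant, since $G$ acts by orientation-preserving isometries of $(\H^2, g_{\H^2})$ and the Levi-Civita connection is natural under isometries; (iii) invoke the uniqueness statement above to conclude $\delta_*(A^\sg) = A^{\mathrm{LC}}$.

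There is essentially no hard computational step — the one thing requiring a moment's thought is the identification of $\delta_*(A^\sg)$ as genuinely $G$-invariant, i.e.\ checking that the reductive-complement connection $A^\sg$ on $G\to G/H$ (which is left-$G$-invariant by construction, being defined by the left-translates of the fixed subspace $\sg\subset\g$) is carried by the $G$-equivariant map $\delta$ to a $G$-invariant connection on the target. This is formal: $\delta$ intertwines the left $G$-action on $G$ with the $G$-action on $\S(T_{\H^2})$ by differentials of isometries, and it is $\sigma$-equivariant for the right structure-group actions, so push-forward of a left-$G$-invariant connection is $G$-invariant. The only genuine obstacle, if any, is bookkeeping: making sure the orientation/sign conventions in Remark \ref{delta} (the choice of $e_1^{x_0}$, the complex orientation, the parametrization \eqref{ht} of $H$) are consistent so that $\delta$ really is a morphism of \emph{principal} $\S^1$-bundles and not merely a fibrewise diffeomorphism — but this has been arranged by the non-standard parametrization of $H$ chosen precisely for this purpose, as the authors note.

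Alternatively, if one prefers not to prove the uniqueness lemma, one can argue directly: by Remark \ref{Ab}, $\delta_*(A^\sg)$ corresponds to $[A_{\pi^k\circ\sigma,0}]$ for some $k\in\Z$, and the first Chern number of the underlying bundle pins down $k$ (here $c_1(\SO(\H^2)) = c_1(T\H^2)$, with the appropriate normalization giving $k$ the value corresponding to the tangent bundle), so $\delta_*(A^\sg)$ and $A^{\mathrm{LC}}$ have the same image in $\Phi_{\alpha,\S^1}$; combined with the fact that they sit on the \emph{same} bundle and that $G$-invariant connections on a fixed bundle are unique, equality follows. Either way the corollary is a short consequence of the general theory already developed.
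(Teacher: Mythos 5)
Your argument is correct and is essentially the paper's: the paper also proves the corollary by observing that the Levi--Civita connection on $\S(T_{\H^2})$, with its obvious $G$-action, is $G$-homogeneous, and that by Remark \ref{Ab} this bundle carries a unique $G$-homogeneous connection, which must therefore be $\delta_*(A^\sg)$. The only real difference is that you prove the uniqueness by hand — the difference of two invariant connections is a $G$-invariant $i\R$-valued $1$-form on $\H^2$, killed at $x_0$ because $H$ acts on $T_{x_0}\H^2$ by nontrivial rotations — which is precisely the computation underlying Remark \ref{Ab} (for abelian $K$ the space $\Hom_{\S^1}^{\chi\circ\sigma'}(\sg,\kg)$ vanishes, forcing $\mu=0$), and it has the merit of settling equality on the fixed bundle rather than only an isomorphism, the subtlety you rightly flag. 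One caution about your closing alternative: since $\H^2$ is contractible, every $\S^1$-bundle on it is trivial and has vanishing first Chern class, so a ``Chern number of the underlying bundle'' cannot pin down $k$; what determines $k$ is the isotropy character of $H$ on the fibre over $x_0$ (equivalently the $G$-equivariant isomorphism type, or the normalized invariant curvature), so that aside should be dropped or rephrased — the main argument, however, stands on its own.
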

\begin{proof}
Indeed, the Levi-Civita connection on the bundle  $\S(T_{\H^2})$ (endowed with the obvious $G$-action)   is obviously $G$-homogeneous. On the other hand Remark \ref{Ab} shows that this bundle has a unique $G$-homogeneous connection.
\end{proof}
\begin{re}\label{LCk}
The  Levi-Civita connection $A_{\mathrm{LC}}$ on the $\S^1$-bundle $\SO(\H^2)$ corresponds to the integer $k=1$ under the identification  given by  Remark \ref{Ab}. Therefore, the identification $\Z\textmap{\simeq}\Phi_{\alpha,\S^1}$ given by Remark \ref{Ab} has a geometric interpretation, and is also given by
\begin{equation}\label{tensor}
k\mapsto [A_{\mathrm{LC}}^{\otimes k}].	
\end{equation}
\end{re}
In  formula (\ref{tensor}) we used the following notation: for a principal $\S^1$-bundle $P$ and a connection $A\in {\cal A}(P)$, we put $P^{\otimes k}:=P\times_{\pi^k}\S^1$ and we denote by $A^{\otimes k}$ the direct image of $A$ in $P^{\otimes k}$. The operations  $P\mapsto P^{\otimes k}$, $A\mapsto A^{\otimes k}$ correspond to the standard tensor power operations $L\mapsto L^{\otimes k}$, $\nabla\mapsto \nabla^{\otimes k}$ for Hermitian line bundles and unitary connections on Hermitian line bundles. Note that the Hermitian line bundle associated with $\SO(\H^2)$ is just the tangent line bundle of the Kählerian complex curve $\H^2$, and $A_{\mathrm{LC}}$ corresponds to the Chern connection of this Hermitian holomorphic line bundle.


\subsubsection{The case  $K=\PU(2)$}
 \label{homogSU(2)PU(2)}

Let 
$$ \theta:\S^1\to   \PU(2)=\SU(2)/\{\pm I_2\}$$
 be the standard monomorphism  $\S^1\to\PU(2)$ given   by
$$\theta(e^{it}):=\left[\left(\begin{matrix} e^{it/2} &0\\ 0&e^{-it/2}\end{matrix}\right)\right],
$$
where $[\ ]$ means class modulo $\{\pm I_2\}$. The image  of  $\theta$  is a maximal torus of  $\PU(2)$, and any monomorphism    $\S^1\to\PU(2)$  is equivalent  with $\theta$  modulo an interior automorphism of    $\PU(2)$. Moreover, any morphism  $\chi:\S^1\to \PU(2)$  is equivalent (modulo an interior automorphism) with a morphism of the form   $\theta_k:=\theta\circ \pi^k$  with $k\in\N$. Similarly, any morphism  $\chi:H\to \PU(2)$  is equivalent with a morphism of the form   $\theta_k\circ \sigma$  with $k\in\N$.

The set of weights of the representation 
$$\S^1\ni\zeta\mapsto \ad_{\theta_k(\zeta)}\in \GL(\su(2)^\C)=\GL(\sl(2,\C))$$
 is $\{\pi^l|\ l\in\{0,\pm  k\}\}$, so $\su(2)^\C_{\theta_k}=\{0\}$ for any $k\in \N\setminus\{1\}$. For any such $k$  formula (\ref{kC}) gives $\su(2)^\C_{\theta_k}=\{0\}$. The case $k=1$  is more interesting. First note that,   by Remark \ref{delta}, we have
\begin{re}\label{Ptheta1}
The bundle 	$P_{\theta_1\circ \sigma}$ can be identified with the $\PU(2)$-extension $$\SO(\H^2)\times_{\S^1}\PU(2)$$ of the frame bundle $\SO(\H^2)$.
\end{re}

 The space $\su(2)^\C_{\theta_1}$ is the complex line $\C I(\mu_0)$, where $\mu_0\in \Hom_{\S^1}^{\theta_1}(\sg,\su(2)) $ is defined by
 $$\mu_0(u_1b_1+u_2b_2):= \frac{1}{2}    \begin{pmatrix}
0& -u_1-i u_2\\ u_1 -i u_2 & 0	
\end{pmatrix}=u_1 a_2- u_2 a_1,$$
and $(a_1,a_2,a_3)$ is the standard basis of $\su(2)$:
 $$a_1:=\frac{1}{2}\begin{pmatrix}
0&i\\ i& 0	
\end{pmatrix},\ a_2:=\frac{1}{2}\begin{pmatrix}
0&-1\\ 1& 0	
\end{pmatrix},\ a_3:=\frac{1}{2}\begin{pmatrix}
i&0\\ 0& -i	
\end{pmatrix}.
 $$
 Therefore, endowing $\Hom_{\S^1}^{\theta_1}(\sg,\su(2))$ with the complex structure induced  from $\su(2)^\C_{\theta_1}$ via $I$, we obtain
 $$\Hom_{\S^1}^{\theta_1}(\sg,\su(2))=\C \mu_0.
 $$
 Using formula (\ref{curvAchimu}) and taking into account that  the tangent vector  $b_i\in\sg\subset T_eG$ is a lift  of $e_i^{x_0}$ for $i\in\{1,2\}$, we obtain the curvature form  
 $$F_{A_{\theta_1, z\mu_0}}\in A^2(\H^2,\ad( P_{\theta_1\circ \sigma}))=A^2(\H^2,G\times_H \su(2))$$
  at the point $y_0\in P_{\theta_1\circ \sigma}$:
 $$F_{A_{\theta_1, z\mu_0}}^{y_0}=(1+|z|^2)e^1_{x_0}\wedge e^2_{x_0}\  a_3,
 $$
where  $(e^1_{x_0},e^2_{x_0})$ is the dual basis of the standard basis $(e_1^{x_0},e_2^{x_0})$ of $T_{\H^2,x_0}$. Using the $G$-invariance of the connection $A_{\theta_1\circ\sigma, z\mu_0}$, this formula determines the  curvature form $F_{A_{\theta_1\circ\sigma, z\mu_0}}$ at any point.
 
 It is interesting to have explicit geometric interpretations of the oriented Euclidian rank 3 vector bundle $E=G\times_H \su(2)\to \H^2$ associated with the principal bundle $P_{\theta_1\circ \sigma}$ via the standard isomorphism $\ad:\PU(2)\to \SO(\su(2))=\SO(3)$, and  of the linear connection $\nabla^z$ on $E$ which corresponds to $A_{\theta_1\circ\sigma, z\mu_0}$.   The morphism $\theta_1\circ \sigma$ leaves invariant the direct sum decomposition $\su(2)=\langle a_1,a_2\rangle\oplus\R a_3$; the subgroup $H$ acts on the plane  $\langle a_1,a_2\rangle$ via the isomorphism $\sigma$, and acts trivially on the line $\R$. Therefore we get a direct sum decomposition
 $$E=\big(G\times_H\langle a_1,a_2\rangle\big)\oplus \underline{\R} a_3
 $$
 where $\underline{\R} a_3$ stands for the trivial line bundle of fibre $\R a_3$. Taking into account Remark \ref{Ptheta1}, the first summand can be identified with the tangent bundle $T_{\H^2}$ via a $G$-invariant isomorphism  mapping $[e_G,a_i]$ onto $e_i^{x_0}$ for $i\in\{1,2\}$. Taking into account Corollary \ref{AsLC} and formulae (\ref{Achimu}), (\ref{alphamu}) we obtain the following matrix decomposition of the linear connection $\nabla^z$:
 \begin{equation}\label{nablaz}
\nabla^{z}=\begin{pmatrix}
\nabla^{\mathrm{LC}} &  B^z\\
 -\psp{t}{B}^z  &  \nabla^0
\end{pmatrix},
 \end{equation}
where $\nabla^{\mathrm{LC}}$ is the Levi-Civita connection on $T_{\H^2}$, $\nabla^0$ is the trivial connection on the trivial line bundle $\underline{\R} a_3$, and the second fundamental form $B^z\in A^1(\H^2,T_{\H^2})$ is the $G$-invariant  $T_{\H^2}$-valued 1-form on $\H^2$ which is given at the point $x_0$ by
$$B^{re^{it}}_{x_0}=r\big(e^1_{x_0}(
\cos(t)e^{x_0}_1+  \sin(t)e^{x_0}_2)+e^2_{x_0}(-
\sin(t)e^{x_0}_1+  \cos(t)e^{x_0}_2)\big).
$$
This shows that
\begin{re}\label{Bz}
Identifying 	$A^1(\H^2,T_{\H^2})$ with $A^0(\H^2,\End_\R(T_{\H^2}))$, and endowing $T_{\H^2}$ with its natural complex structure, one has $B^z=z\id_{T_{\H^2}}$.
\end{re}

We can now give an explicit description of the moduli space 
$${\cal M}(\PSL(2,\R),H,\PU(2))\simeq \Phi_{\alpha,\PU(2)}$$
classifying  isomorphism classes of $G$-homogeneous $\PU(2)$-connections on the hyperbolic plane. The centraliser of the morphism $\theta_1\circ\sigma:H\to \PU(2)$ with respect to the adjoint action of $\PU(2)$ is $\im(\theta_1)\simeq \S^1$, and it acts with weight 1 on the complex line $\Hom_{\S^1}^{\theta_1}(\sg,\kg)$. This shows that 
$${\cal M}(\PSL(2,\R),H,\PU(2))=\{[\theta_k\circ\sigma,0]|\ k\in\N\setminus\{1\}\}\cup {\cal M}_1
\,,
$$
where 
 
$${\cal M}_1=\{[\theta_1\circ\sigma,r\mu_0]|\ r\in[0,\infty)\}\simeq [0,\infty)\,.
$$

\subsubsection{Homogeneous $\SO(3)$-connections on the hyperbolic plane, and umbilical foliations on the hyperbolic space}
\label{umbilic}

\def\sech{\mathrm{sech}}

Let 
$$
\H^3:=\{x\in\R^3|\ x_3>0\}
$$
be the hyperbolic space endowed with the standard hyperbolic metric 
$$g=\frac{1}{x_3^2}(\sum_i dx_i^2).$$
 For a point $x\in \H^3$ the $g$-normalized vectors $\bar e_i^x=x_3 e_i^x$ give an orthonormal basis of the Euclidian space $(T_x\H^3,g_x)$.  We denote by $\bar e_i\in {\cal X}(\H^3)$ the vector field $x\to  \bar e_i^x$.

For $t\in\R$ let $f^t:\H^3\to \H^3$ be the diffeomorphism of $\H^3$ induced by linear isomorphism associated with the matrix
$$F^t:=\begin{pmatrix}
1&0&\tanh(t)\\
0&1 &0\\
0&0&\sech(t)	
\end{pmatrix}.
$$
The family $(f_t)_{t\in\R}$ has an important interpretation:
\begin{re}
For any $x\in\H^3$, the curve $t\mapsto f^t(x)$ coincides with the geodesic path  $\gamma^x:\R\to \H^3$ determined by the initial conditions
$$\gamma_x(0)=x,\ \dot\gamma_x(0)=\bar e_1^{x}.
$$
\end{re}
Put
$$H_0:=\{x\in \H^3|\ x_1=0\},\ H_t:=f_t(H_0).
$$
The surface $H_t$ is the intersection of $\H^3$ with the plane defined  by the equation $x_1=\sinh(t)x_3$. It is well known that $H_0$ is a totally geodesic hypersurface of $\H^3$, whereas $H_t$ is an umbilic hypersurface of $\H^3$ for any $t\in\R$.  The family $(H_t)_{t\in\R}$ defines a foliation ${\cal F}$ of $\H^3$ by umbilic hypersurfaces.  Note that there is an interesting literature dedicated to the classification of foliations by umbilic submanifolds (see for instance \cite{Wals}). The classification theorem \cite[Theorem 2.6]{Wals} states that any isoparametric 1-codimensional umbilic foliations of a Riemannian manifold of negative curvature is either a foliation by horospheres, or a foliation by hypersurfaces equidistant from a totally geodesic hypersurface.   Our foliation ${\cal F}$ belongs to the second class.

The following proposition gives a geometric interpretation of the $G$-homogeneous connections $A_{\theta_1\circ\sigma, s\mu_0}$  obtained with formal methods:
\begin{pr}\label{GeomInt}
Let $B_{\mathrm LC}$ be the Levi-Civita connection on $\SO(\H^3)$, and let $\varphi_t:\H^2\to \H^3$ be the embedding $(x_1,x_2)\mapsto f_t(0,x_1,x_2)$ whose image is the umbilic surface $H_t$.  There exists a natural isomorphism of pairs
$$(\varphi_t^*(\SO(\H^3)),\varphi_t^*(B_{\mathrm LC}))\simeq( P_{\theta_1\circ\sigma}, A_{\theta_1\circ\sigma,\sinh(t)\mu_0}).
$$	
\end{pr}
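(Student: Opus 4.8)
The plan is to verify the isomorphism by computing both sides explicitly at the base point $x_0=i\in\H^2$ and invoking $G$-invariance to propagate the identification. First I would make precise the $\PSL(2,\R)$-action relevant to the left-hand side: the umbilic surface $H_t\subset\H^3$ carries a hyperbolic metric (of constant negative curvature, possibly rescaled), and its orientation-preserving isometry group is a copy of $\PSL(2,\R)$ which, via the embedding $\varphi_t$, is realized as the stabilizer in $\Iso(\H^3)$ of the totally geodesic surface $H_0$ conjugated by $f_t$. This subgroup acts transitively on $H_t$, preserves the normal line field, hence acts on $\varphi_t^*(\SO(\H^3))$ by bundle automorphisms covering its action on $H_t\cong\H^2$, and preserves $\varphi_t^*(B_{\mathrm{LC}})$ because $B_{\mathrm{LC}}$ is $\Iso(\H^3)$-invariant. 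Thus $(\varphi_t^*(\SO(\H^3)),\varphi_t^*(B_{\mathrm{LC}}))$ is a $\PSL(2,\R)$-homogeneous $\SO(3)$-connection on $\H^2$, and by Theorem \ref{ThBiTe} (through the identification (\ref{kC}) and the discussion of the case $K=\PU(2)\cong\SO(3)$) it corresponds to a unique point of ${\cal M}(\PSL(2,\R),H,\PU(2))$.

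Next I would identify which point. The decomposition $T\H^3|_{H_t}=TH_t\oplus\nu$ into tangent and normal parts makes $\varphi_t^*(\SO(\H^3))$ reduce, along the lines of formula (\ref{nablaz}), to a $2+1$ block structure: the $TH_t$-block is the Levi-Civita connection of the induced metric (which is $\nabla^{\mathrm{LC}}$ after identifying $H_t$ with $\H^2$), the $\nu$-block is the trivial connection on the trivial normal line bundle (trivial because $\nu$ is spanned by the unit normal, which is parallel along $H_t$ precisely since $H_t$ is umbilic — actually one must be slightly careful: umbilicity gives that the normal connection is flat and, the surface being simply connected, trivial), and the off-diagonal block is the second fundamental form of $H_t$ in $\H^3$. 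Since $H_t$ is umbilic, its second fundamental form is a scalar multiple of the metric, i.e. of the identity endomorphism of $TH_t$; the scalar is the principal curvature of $H_t$, which a direct computation with the matrix $F^t$ and the plane $x_1=\sinh(t)x_3$ gives to be $\sinh(t)$ (or $\tanh(t)$ depending on normalization — this is where I must be careful and match conventions with Remark \ref{Bz}). Comparing with (\ref{nablaz}) and Remark \ref{Bz}, which say that the connection $A_{\theta_1\circ\sigma,z\mu_0}$ has off-diagonal block $z\,\id_{T\H^2}$, I read off that the principal curvature equals $z=\sinh(t)$, so the pair is $A_{\theta_1\circ\sigma,\sinh(t)\mu_0}$.

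The main obstacle is the bookkeeping of normalizations and the precise matching of the two $\PSL(2,\R)$-structures. Concretely: (i) one must check that the hyperbolic metric induced on $H_t$ has the same constant curvature as $g_{\H^2}$ (the surfaces $H_t$ for $t\neq 0$ are equidistant hypersurfaces, not totally geodesic, so their induced metric has curvature $-1+(\text{pr.\ curv.})^2$... in fact it equals $-\sech^2(t)$ or similar, so one either rescales or — better — notes that $H_t$ with its induced metric is still isometric to $\H^2$ after rescaling, and the rescaling does not affect the frame bundle as a principal bundle, only the identification of the base, which is absorbed into the isomorphism); (ii) one must pin down the sign and parametrization so that $z$ is real and nonnegative, consistent with the fact that ${\cal M}_1\simeq[0,\infty)$ is parametrized by $r=|z|$, and $\sinh(t)$ ranges over all of $\R$ — here the point is that $\sinh(t)\mu_0$ and $-\sinh(t)\mu_0$ give isomorphic pairs under the residual $\im(\theta_1)\cong\S^1$-action, so the claim is consistent and this also explains why Theorem \ref{classPU(2)irrintro} gets a unique $r>0$ rather than a unique $t$. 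Once the curvature-normalization issue is handled and the off-diagonal block is identified as $\sinh(t)\,\id$, the isomorphism of pairs follows from Theorem \ref{ThBiTe} because both sides are $G$-homogeneous and represent the same class in the moduli space; I would spell this out by exhibiting the explicit $G$-equivariant bundle map sending the distinguished frame at $x_0$ (the image under $\varphi_{t*}$ of the standard frame $(e_1^{x_0},e_2^{x_0})$ together with the unit normal) to the distinguished point $y_0\in P_{\theta_1\circ\sigma}$, and checking it intertwines the connections at $x_0$ using the block computation above, invariance then giving the identification everywhere.
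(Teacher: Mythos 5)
Your proposal is correct and follows essentially the same route as the paper's proof: split $T_{\H^3}|_{H_t}$ into tangent and normal parts to identify the pulled-back frame bundle with the $\SO(3)$-extension of $\SO(\H^2)$ (Remark \ref{Ptheta1}), decompose the restricted Levi-Civita connection as in (\ref{nablaz}), and match the off-diagonal (second fundamental form) block with $z\,\id_{T_{\H^2}}$ via Remark \ref{Bz}. The one point you leave hanging — $\tanh(t)$ versus $\sinh(t)$ — resolves exactly through the rescaling you mention: $\varphi_t$ is a homothety of ratio $\cosh(t)$ (indeed $\varphi_t^*(g_{\H^3})=\cosh^2(t)\,g_{\H^2}$, which leaves the Levi-Civita connection unchanged), so transporting the shape operator $\tanh(t)\,\id$ of $H_t$ through the frame identification multiplies it by $\cosh(t)$, giving the block $\sinh(t)\,\id$ and hence the parameter $\sinh(t)\mu_0$.
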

\begin{proof}  Using the orthogonal direct sum decomposition $\resto{T_{\H^3}}{H_t}=T_{H_t}\oplus N_{H_t}$ we see that the restriction $\resto{\SO(\H^3)}{H_t}$  can be identified with the $\SO(3)$-extension of the $\SO(2)$-bundle $\SO(H_t)$. Taking into account our geometric description of the bundle $P_{\theta_1\circ\sigma}$ (see Remark \ref{Ptheta1}), we obtain an obvious identification  $\varphi_t^*(\SO(\H^3))\simeq P_{\theta_1\circ\sigma}$.

   To get the claimed identification $\varphi_t^*(B_{\mathrm LC})=A_{\theta_1\circ\sigma,\sinh(t)\mu_0}$ one  uses formula (\ref{nablaz}) and the similar matrix decomposition of the restriction $\resto{B_{\mathrm LC}}{H_t}$. It suffices to identify the second fundamental forms of the two connections. The equality follows using Remark \ref{Bz} and a direct computation of the second fundamental form of $H_t$.	
\end{proof}

Proposition \ref{GeomInt} states that: 
\begin{re}  
Via the  diffeomorphism 
$$\H^2\textmap{ \varphi_{\mathrm{argsh}(s)}} H_{\mathrm{argsh}(s)}$$
 the $G$-homogeneous connection $A_{\theta_1\circ\sigma, s\mu_0}$ can be identified with the restriction of the Levi-Civita connection $B_{\mathrm LC}\in {\cal A}(\SO(\H^3))$ to the leaf $H_{\mathrm{argsh}(s)}$ of the umbilic foliation ${\cal F}$.	
\end{re}

This also gives a geometric interpretation of the moduli space ${\cal M}_1$: it can be identified with the space of isomorphism classes of restrictions   
$$\resto{B_{\mathrm LC}}{H_{t}},\ t\in[0,\infty).$$
\section{Classification theorems for LH  connections}
\label{ClassSection}

\subsection{The case $K=\S^1$. LH and Yang-Mills connections }
\label{S1section}

The classification of locally homogeneous $\S^1$-connections on hyperbolic Riemann surfaces is more  interesting than expected. The natural idea is to write $M$ as a quotient $\H^2/\Gamma$, where $\Gamma\subset \PSL(2,\R)$ acts properly discontinuously on $\H^2$, and to consider $\Gamma$-quotients  of $\PSL(2,\R)$-homogeneous $\S^1$-connections on $\H^2$. Unfortunately, in this way one obtains a very small class of locally homogeneous $\S^1$-connections on $M$.  Indeed, Remark \ref{LCk} shows that  this method yields only the tensor powers $(A_{\mathrm{LC}}^M)^{\otimes k}$ of the Levi-Civita connection $A_{\mathrm{LC}}^M$ of $M$.  The tensor power $(A_{\mathrm{LC}}^M)^{\otimes k}$ is a connection on $\SO(M)^{\otimes k}$, whose Chern class is $2k(1-g(M))$. This shows that a principal $\S^1$-bundle $P$ admits a connection obtained in this way if and only if $c_1(P)\in 2\Z(1-g(M))$ and, if this is the case, $P$ admits a unique gauge class of such a connection.  On the other hand one has the following general result concerning the classification of LH $\S^1$-connections on  Riemann surfaces:
\begin{pr}\label{LocHomYM} \cite{Ba2}
Let $(M,g)$ be a connected, oriented, compact Riemann surface endowed with a Riemannian metric with constant curvature, let $P$ be a principal $\S^1$-bundle on $M$ and $A\in {\cal A }(P)$ be a connection on $P$. Then $A$ is LH if and only if $A$ is Yang-Mills.	
\end{pr}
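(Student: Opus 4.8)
The plan is to deduce both implications from the following elementary fact, valid on any connected oriented Riemannian surface: a connection $A$ on a principal $\S^1$-bundle $P$ is Yang--Mills if and only if $F_A=c\,\vol_g$ for some \emph{constant} $c\in\mathrm{Lie}(\S^1)$. Indeed $F_A$ is automatically closed by the Bianchi identity, and for a $2$-form on a surface one has $d^*F_A=-{*}d{*}F_A$, so the Yang--Mills equation $d^*F_A=0$ is equivalent to $d({*}F_A)=0$, i.e.\ to the function ${*}F_A$ being constant (here one uses that $M$ is connected). Throughout, write $F_A=f\,\vol_g$ with $f\in C^\infty(M,\mathrm{Lie}(\S^1))$, and recall that for a local isometry $\varphi$ one has $\varphi^*\vol_g=\varepsilon(\varphi)\,\vol_g$ with $\varepsilon(\varphi)\in\{+1,-1\}$ according to whether $\varphi$ preserves or reverses orientation.

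\textbf{Locally homogeneous $\Rightarrow$ Yang--Mills.} Fix $x,x'\in M$. The definition of a LH connection supplies an isometry $\varphi:U\to U'$ with $\varphi(x)=x'$ and a $\varphi$-covering bundle isomorphism $\Phi:P_U\to P_{U'}$ with $\Phi^*(A_{U'})=A_U$. For an $\S^1$-bundle the adjoint bundle is canonically trivial, so the curvature is an ordinary $\mathrm{Lie}(\S^1)$-valued $2$-form on the base; naturality of the curvature under $\Phi$ then gives $F_{A_U}=\varphi^*(F_{A_{U'}})$ on $U$, i.e.\ $f(y)=\varepsilon(\varphi)\,f(\varphi(y))$ for all $y\in U$, and in particular $|f(x)|=|f(x')|$. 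Since $x,x'$ were arbitrary, $|f|$ is constant on $M$; being continuous, of constant absolute value, and valued in the one-dimensional space $\mathrm{Lie}(\S^1)$, the function $f$ is therefore constant on the connected manifold $M$. By the fact recalled above, $A$ is Yang--Mills.

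\textbf{Yang--Mills $\Rightarrow$ locally homogeneous.} Now $F_A=c\,\vol_g$ with $c$ constant, and we must verify the defining property of a LH connection. Fix $x,x'\in M$. A surface of constant curvature has the local isometry extension property: for any linear isometry $L:T_xM\to T_{x'}M$ there is a local isometry $\varphi:U\to U'$ with $\varphi(x)=x'$ and $d\varphi_x=L$. Choosing $L$ to be orientation-preserving and $U,U'$ to be small geodesic discs, we obtain $\varphi$ with $\varepsilon(\varphi)=+1$; concretely, such a $\varphi$ can also be produced by pushing down, through evenly covered neighbourhoods, an orientation-preserving isometry of the simply connected model $\tilde M$ of $(M,g)$, the orientation-preserving component of $\Iso(\tilde M)$ already acting transitively on $\tilde M$. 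Over the disc $U$ both $P_U$ and $\varphi^*(P_{U'})$ are trivial $\S^1$-bundles, and the connections $A_U$ and $\varphi^*(A_{U'})$ have the same curvature $c\,\vol_g|_U$ --- this is exactly where $\varepsilon(\varphi)=+1$ is used. Two connections on a trivial $\S^1$-bundle over a simply connected set that have the same curvature differ by an exact $1$-form, hence by a gauge transformation; composing that gauge transformation with the tautological $\varphi$-covering map $\varphi^*(P_{U'})\to P_{U'}$ yields a $\varphi$-covering bundle isomorphism $\Phi:P_U\to P_{U'}$ with $\Phi^*(A_{U'})=A_U$. Since $x,x'$ were arbitrary, $A$ is LH.

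\textbf{Anticipated main difficulty.} All the real content is in the orientation bookkeeping. In the first implication the sign $\varepsilon(\varphi)$ a priori yields only $f(x)=\pm f(x')$, and this is upgraded to $f(x)=f(x')$ only through the detour via $|f|$, which works because $F_A$ takes values in a one-dimensional space and $M$ is connected. In the converse one must be able to choose the local isometry between two given points to be orientation-preserving, which is where the constant-curvature hypothesis --- rather than mere local homogeneity of the metric --- is used. Everything else (local triviality of $\S^1$-bundles over discs, the fact that over a simply connected set a connection on such a bundle is determined up to gauge by its curvature, and the characterisation of Yang--Mills connections recalled at the start) is routine.
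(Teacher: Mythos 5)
Your proof is correct. Note that the paper itself does not prove Proposition \ref{LocHomYM}: it is quoted from the thesis \cite{Ba2} without argument, so there is no in-text proof to compare against. Your argument is the natural and complete one: on a connected oriented surface the Yang--Mills condition for an $\S^1$-connection is exactly constancy of $*F_A$; the implication LH $\Rightarrow$ YM follows by transporting the curvature through the $\varphi$-covering isomorphisms, and your detour through $|f|$ correctly handles the fact that the definition of LH allows orientation-reversing isometries (the sets $\{f=c\}$ and $\{f=-c\}$ are closed and cover the connected $M$, so $f$ is constant). For the converse, the two ingredients you use are the right ones and are where the hypotheses enter: the constant-curvature assumption yields local isometries carrying $x$ to $x'$ that can be chosen orientation-preserving (either by the Cartan-type local isometry extension property or, as you also indicate, by descending an orientation-preserving isometry of the simply connected model through evenly covered neighbourhoods), and the abelian structure group makes a connection over a disc determined up to gauge by its curvature, since the difference of two such connections with equal curvature is a closed, hence exact, $i\R$-valued $1$-form $d\xi=g^{-1}dg$ for $g=e^{\xi}$. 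Composing that gauge transformation with the tautological $\varphi$-covering map gives the required $\Phi$, so the verification of local homogeneity is complete. This is also consistent with how the result is used later in the paper (Theorem \ref{S1classTh}), where the LH connections on $\S^1$-bundles are precisely the Yang--Mills ones.
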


This shows that, for {\it any} fixed integer $k\in \Z$, the set of isomorphism classes of locally homogeneous pairs  $(P,A)$ with $c_1(P)=k$ can be identified with the moduli space of Yang-Mills connections on a Hermitian line bundle of Chern class $k$, which is a torus of dimension $2g$ (see for instance \cite{Te}).

We will see that, using the classification Theorem \ref{mainIntro}  and the method explained in section \ref{Intro1}, any locally homogeneous (or, equivalently, any Yang-Mills) $\S^1$-connection on $M$ can be obtained explicitly as the  quotient of  a homogeneous connections on $\H^2$, but we will have to replace the group $\PSL(2,\R)$ by an $\S^1$-extension of it (which depends on the Chern class of the underlying $\S^1$-bundle).  \\

We will need the universal cover $c:\widetilde{\PSL(2,\R)}\to \PSL(2,\R)$  of the Lie group $\PSL(2,\R)$. The loop $\S^1\to \PSL(2,\R)$  
$$e^{i\tau}\mapsto h_\tau:= \left[\left(\begin{matrix}\cos(\tau/2)&\sin(\tau/2)\\ -\sin(\tau/2)&\cos(\tau/2)\end{matrix}\right)\right], $$
defines a generator $\gamma$ of $\ker(c)=\pi_1(\PSL(2,\R))$. The Lie group $\hat G_t$  defined by
$$\hat G_t:=\qmod{\widetilde{\PSL(2,\R)}\times\S^1}{\{(\gamma^k,e^{-2\pi tk i})|\ k\in\Z\}}\, .
$$
fits in the short exact sequence
$$1\to \S^1\textmap{j_t} \hat G_t\textmap{c_t}  \PSL(2,\R)\to 1 \,,
$$
where the morphisms $j_t$, $c_t$ are given by $j_t(z):=[e,z]$, $c_t([y,\zeta]):= c(y)$. 

The restriction of $c$ to  $\tilde H:= c^{-1}(H)  \subset \widetilde{\PSL(2,\R)}$ is a  universal cover of $H$.  Note that $\tilde H$ is the image of a 1-parameter subgroup $\R\ni \tau \mapsto\tilde h_\tau$, where  $\tilde h_\tau $ is a lift of $h_\tau$, and  $\gamma=\tilde h_{2\pi}$. The subgroup $\hat H_t:= c_t^{-1}(H)\subset \hat G_t$ can be written as
$$\qmod{\tilde H\times \S^1}{\{(\gamma^k,e^{-2\pi tk i})|\ k\in\Z\}}\,,
$$
and is abelian. We have a short exact sequence
\begin{equation}\label{shexseq}
1\to \S^1\textmap{i_t} \hat H_t\textmap{p_t}  H\to 1,	
\end{equation}
where the morphisms $i_t$, $p_t$ are defined similarly to $j_t$, $c_t$.  The Lie algebra $\hat\g_t$ of $\hat G_t$   decomposes as  $\hat\g_t=\sl(2,\R)\times i\R$;  the subspace $\hat\sg_t:=\sg\times\{0\}\subset \hat\g_t$ is a  $\hat H_t$-invariant complement of $\hat \hg_t$.

The group morphism  $\chi_t:\hat H_t\to \S^1$, $\chi_t([\tilde h_\tau,\zeta]):= e^{ i\tau  t}\zeta$  is  a left splitting of the short exact sequence (\ref{shexseq}), so it defines an isomorphism $\hat H_t\simeq \S^1\times \S^1$. 

The group $ \hat G_t$ acts on  $\H^2$ via $c_t$ and  the stabiliser of  $x_0$ with respect to this action is $c_t^{-1}(H)=\hat H_t$. Using the general construction method explained in section \ref{genth} we obtain a $\hat G_t$-homogeneous  connection $(P_{\chi_t},A_{{\chi_t},0})$ associated with 
the triple $(\hat G_t, \hat H_t,\hat\sg_t)$ and the pair $(\chi_t,0)$.

Using this construction we obtain the following theorem, which describes all Yang-Mills $\S^1$-connections (hence all LH $\S^1$-connections) on hyperbolic Riemann surfaces as quotients of homogeneous connections on $\H^2$. This result is stated without proof in \cite{Ba2}.
\begin{thry} \label{S1classTh} Let $M:=\H^2/\Gamma$ be a compact, hyperbolic Riemann surface, where $\Gamma$ is discrete subgroup of $\PSL(2,\R)$ acting properly discontinuously on $\H^2$.  
For a   principal $\S^1$-bundle  $P$  on $M$ put $t:= \frac{c_1(P)}{2(1-g)}$. 
For any LH connection  $A$ on $P$ there exists a unique lift $\jg:\Gamma\to \hat G_t$ of the monomorphism $\Gamma\hookrightarrow\PSL(2,\R)$ such that
$$(P,A)\simeq \qmod{(\hat G_t\times_{\chi_t}\S^1,A_{\chi_t,0})}{\Gamma},
$$
where $\Gamma$ acts on $\hat G_t\times_{\chi_t}\S^1$ via $\jg$.  
\end{thry}
\begin{proof}  
The curvature formula (\ref{curvAchimu})	 shows that
$$F_{A_{\chi_t,0}}=tF_{A_{\chi_1,0}}=tF_{A_{\sigma,0}}.
$$
On the other hand, by Remark \ref{LCk}, we have $F_{A_{\sigma,0}}=F_{A_\mathrm{LC}}$.  Note that the Chern form $c_1(A_{\chi_t,0})=\frac{i}{2\pi} F_{A_{\chi_t,0}}$ is a $\Gamma$-invariant 2-form on $\H^2$, so it descends to a 2-form $\bar c_1(A_{\chi_t,0})$ on the closed surface $M$.  
Putting $c:=c_1(P)\in H^2(M,\Z)\simeq\Z$, $t:=\frac{c}{2(1-g)}$ we obtain 
\begin{equation}\label{Zf}
[\bar c_1(A_{\chi_t,0})]_{\rm DR}=c. 	
\end{equation}

On the other hand the obstruction to the existence of a lift $\jg:\Gamma\to \hat G_t$ of the embedding monomorphism $\iota:\Gamma\hookrightarrow \PSL(2,\R)$ is a cohomology class $o\in H^2(\Gamma,\S^1)=H^2(M,\S^1)$. Using a standard \v{C}ech-de Rham double complex argument, it follows that $o$ is precisely the image of $[\bar c_1(A_{\chi_t,0})]_{\rm DR}\in H^2(M,\R)$ in the quotient $H^2(M,\S^1)=H^2(M,\R)/H^2(M,\Z)$. Therefore (\ref{Zf}) shows that $o$ vanishes, so   the set $\Jg$ of 
 lifts $\jg:\Gamma\to \hat G_t$ of the embedding monomorphism $\iota:\Gamma\hookrightarrow \PSL(2,\R)$ is non-empty. For any $\jg\in\Jg$ the Chern class of the  corresponding quotient bundle 
 $$P_\jg=P_{\chi_t}/_\jg\ \Gamma$$
  is  $[\bar c_1(A_{\chi_t,0})]_{\rm DR}=c$, so  $P_\jg\simeq P$. Therefore for any $\jg\in\Jg$ we obtain a quotient connection $A_\jg$ induced by $A_{\chi_t,0}$ on an $\S^1$-bundle $P_\jg\simeq P$. The set $\Jg$ has an obvious structure of a $\Hom(\Gamma,\S^1)$-torsor, and it is easy to see that, for  any $\rho\in \Hom(\Gamma,\S^1)$, the connection $A_{\rho\jg}$  can be identified with the tensor product of $A_\jg$ by the flat connection associated with $\rho$. Therefore the set of isomorphism classes $\{[A_\jg]|\ \jg\in\Jg\}$ coincides with the whole torus of gauge classes of Yang-Mills connections on $P$, and the map $\jg\mapsto [A_\jg]$ is bijective.
\end{proof}

\subsection{The case $K=\PU(2)$}  

Let $\Gamma\subset \PSL(2,\R)$ be a discrete subgroup acting properly discontinuously on  $\H^2$ with  compact quotient, and let $(M,g_M)$ be the hyperbolic  Riemann surface $M:=\H^2/\Gamma$. The classification of    locally homogeneous $\PU(2)$-connections on $M$ is obtained by taking into account the stabilizer of the pull-back connection on $\H^2$. Let  $P$ be a principal $\PU(2)$-bundle on $M$, and $A$ be a locally homogeneous connection on $P$. Let $B$ be the pull-back connection on the pull-back bundle $Q$ on $\H^2$.

\subsubsection{Locally homogeneous  connections with irreducible pull-back}

Suppose that $B$ is irreducible, i.e. it  has 	trivial stabiliser. In this case the classification Theorem \ref{mainIntro} applies with ${\cal G}^B(Q)=\{\id\}$, and gives 
\begin{enumerate}
\item   A  closed subgroup $G\subset \Iso(\H^2, g_{\H^2})$ acting transitively  on $\H^2$ which contains $\Gamma$ and leaves  invariant the gauge class $[B]\in {\cal B}(Q)$.
\item A lift $\jg:\Gamma\to G$ of  the inclusion monomorphism $\iota_\Gamma:\Gamma\to G$.
\item An isomorphism between the $\Gamma$-quotient of $(Q,B)$ and the initial pair $(P,A)$.
\end{enumerate}

The connected component $G_0\subset G$ of $\id\in G$ still acts transitively on $\tilde M$ but it is not clear if it still contains $\Gamma$. 

\begin{re} \label{unimodular} With the notations introduced in Theorem \ref{mainIntro} suppose that $(\tilde M,\tilde g)=(\H^2,g_{\H^2})$, and put $\Gamma_0:=\Gamma\cap G_0$. The quotient $\Gamma_0\backslash G_0$ can be identified with the connected component of  $\Gamma\in \Gamma\backslash G$ in this quotient, therefore the compactness of $\Gamma\backslash G$ implies the compactness of $\Gamma_0\backslash G_0$. This shows that $G_0$ is a connected Lie subgroup of $\PSL(2,\R)$  which acts transitively on $\H^2$ and is unimodular. This implies $G_0=\PSL(2,\R)$, in particular  $G_0$ contains $\Gamma$.
\end{re}

Therefore we may suppose $G=G_0=\PSL(2,\R)$, so we reduced our problem to the classification of $\PSL(2,\R)$-homogeneous $\PU(2)$-connections on $\H^2$ studied in section \ref{homogSU(2)PU(2)}. We obtain:
\begin{thry} \label{classPU(2)irr} Let $\Gamma\subset \PSL(2,\R)$ be a discrete subgroup acting properly discontinuously on  $\H^2$ with compact quotient, and let $(M,g_M)$ be the hyperbolic  Riemann surface $M:=\H^2/\Gamma$.
Let  $P$ be a principal $\PU(2)$-bundle on $M$, and $A$ be a  locally homogeneous connection on $P$ whose  pull-back connection to $\H^2$ is irreducible.  There exists a unique $r>0$ for which the pair  $(P,A)$ is isomorphic to the   $\Gamma$-quotient of $(P_{\theta_1\circ\sigma},A_{\theta_1\circ\sigma,r\mu_0})$.\end{thry}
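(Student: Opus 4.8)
The plan is to combine Remark \ref{unimodular} with the explicit description of the moduli space ${\cal M}(\PSL(2,\R),H,\PU(2))$ obtained in section \ref{homogSU(2)PU(2)}, and then to analyze carefully what the $\Gamma$-quotient construction does. First, by Theorem \ref{mainIntro} applied with ${\cal G}^B(Q)=\{\id\}$, the irreducible pull-back pair $(Q,B)$ is $G$-homogeneous for some closed $G\subset\Iso(\H^2,g_{\H^2})$ containing $\Gamma$, and $(P,A)$ is the $\Gamma$-quotient. By Remark \ref{unimodular}, the identity component $G_0$ equals $\PSL(2,\R)$ and contains $\Gamma$; since $(Q,B)$ restricted to the $G_0$-action is $\PSL(2,\R)$-homogeneous, we may replace $G$ by $\PSL(2,\R)$. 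So $(Q,B)$ is a $\PSL(2,\R)$-homogeneous $\PU(2)$-connection on $\H^2$, hence by Theorem \ref{ThBiTe} its isomorphism class lies in ${\cal M}(\PSL(2,\R),H,\PU(2))=\{[\theta_k\circ\sigma,0]\mid k\in\N\setminus\{1\}\}\cup\{[\theta_1\circ\sigma,r\mu_0]\mid r\in[0,\infty)\}$.

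Next I would eliminate the reducible members of this list. The connections $A_{\theta_k\circ\sigma,0}$ with $k\neq 1$, and also $A_{\theta_1\circ\sigma,0}$ (the case $r=0$), all admit a $\PSL(2,\R)$-homogeneous $\S^1$-reduction, so their stabilizer is non-trivial; the stabilizer only grows under pull-back along the covering, and in any case the hypothesis that $B$ is irreducible forces the stabilizer of $(Q,B)$ to be trivial. Using the curvature formula (\ref{curvAchimu}), or equivalently the holonomy description $\Phi_{\chi,\mu}$, one checks that the stabilizer of $A_{\theta_1\circ\sigma,r\mu_0}$ is trivial exactly when $r>0$: for $r>0$ the curvature $F^{y_0}=(1+r^2)e^1_{x_0}\wedge e^2_{x_0}\,a_3$ together with the image of $\theta_1\circ\sigma$ generates (the Lie algebra of) all of $\PU(2)$ via the $G$-invariance, so the centralizer of the holonomy is $Z(\PU(2))=\{1\}$. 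Hence $(Q,B)\simeq(P_{\theta_1\circ\sigma},A_{\theta_1\circ\sigma,r\mu_0})$ for a unique $r>0$, the uniqueness coming from the fact that ${\cal M}_1\simeq[0,\infty)$ is a genuine parametrization (the $\Aut(\PSL(2,\R))$-action, which could in principle further identify parameters, acts on ${\cal M}_1$ compatibly with the $r\mapsto r$ identification because the relevant outer automorphisms fix the conjugacy class of $(\theta_1\circ\sigma)$ and act on the weight line $\Hom_{\S^1}^{\theta_1}(\sg,\su(2))=\C\mu_0$ by rotations, preserving $|z|=r$).

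Finally, I would record that $(P,A)$ is then the $\Gamma$-quotient of $(P_{\theta_1\circ\sigma},A_{\theta_1\circ\sigma,r\mu_0})$ with $\Gamma$ acting through the lift $\jg:\Gamma\to\PSL(2,\R)$ of $\iota_\Gamma$ — here the lift is just the inclusion itself since $\hat G=G=\PSL(2,\R)$ — and that the isomorphism type of this quotient depends only on $r$, so the resulting $r>0$ is an invariant of $(P,A)$, giving uniqueness. Conversely each $r>0$ is realized, since the $\Gamma$-action on the irreducible homogeneous pair always descends. \textbf{The main obstacle} I expect is the second step: showing rigorously that none of the "degenerate" homogeneous pairs (the $\theta_k\circ\sigma$ with $k\neq1$, and $r=0$) can have an irreducible pull-back, and conversely that the pull-back stays irreducible for $r>0$ — i.e. controlling how the stabilizer of a homogeneous connection behaves under restriction of the symmetry group from $\hat G$ to $\Gamma$ and under pull-back $P\to Q$. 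This is really a statement about holonomy: the holonomy group of $B$ on $\H^2$ (simply connected) versus the holonomy of $A$ on $M$; one must argue these have the same closure, so that "irreducible pull-back" is equivalent to "$(Q,B)$ has trivial stabilizer", which is what lets us read off $r>0$ from the list.
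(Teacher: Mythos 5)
Your proposal is correct and follows essentially the same route as the paper: apply Theorem \ref{mainIntro} with ${\cal G}^B(Q)=\{\id\}$, invoke Remark \ref{unimodular} to force $G=G_0=\PSL(2,\R)\supset\Gamma$, and then read off the answer from the moduli space ${\cal M}(\PSL(2,\R),H,\PU(2))$ of section \ref{homogSU(2)PU(2)}; you even make explicit the elimination of the reducible strata ($\theta_k\circ\sigma$ with $k\neq 1$, and $r=0$), which the paper leaves implicit. The only quibble is minor: the curvature value $(1+r^2)\,a_3$ together with $\im(\theta_1\circ\sigma)$ does not by itself generate $\pu(2)$ (both lie along $\R a_3$); one needs the brackets with $\mu(\sg)=r\langle a_1,a_2\rangle$ (Wang/Ambrose--Singer), or simply the observation that irreducibility of $A_{\theta_1\circ\sigma,r\mu_0}$ for the selected $r$ is automatic because it is isomorphic to the irreducible $B$ --- a harmless fix that does not affect the argument.
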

Therefore 
\begin{co} The moduli space of locally homogeneous  $\PU(2)$-connections on $M$ with irreducible pull-back to $\H^2$ can be naturally identified to the half-line $(0,\infty)$.	
\end{co}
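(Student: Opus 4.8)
The plan is to push the irreducibility hypothesis through Theorem~\ref{mainIntro}, reduce everything to the hyperbolic plane, and then isolate the relevant model among the $\PSL(2,\R)$-homogeneous $\PU(2)$-connections classified in section~\ref{homogSU(2)PU(2)}. First I would use that, $B$ being irreducible, the stabiliser ${\cal G}^B(Q)$ is trivial, so the group $\hat G={\cal G}_G^B(Q)$ produced by Theorem~\ref{mainIntro} maps isomorphically onto the transitive subgroup $G\subset\Iso(\H^2,g_{\H^2})$ via the projection in (\ref{FirstExSeq}). Hence $(Q,B)$ is a genuine $G$-homogeneous $\PU(2)$-connection on $\H^2$, the lift $\jg:\Gamma\to\hat G\simeq G$ of $\iota_\Gamma$ is forced to be $\iota_\Gamma$ itself, and $(P,A)$ is the $\Gamma$-quotient of $(Q,B)$ for the inclusion $\Gamma\subset G$. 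By Remark~\ref{unimodular}, whose unimodularity argument applies because $\Gamma\backslash G$ is compact, the identity component $G_0$ equals $\PSL(2,\R)$ and contains $\Gamma$; restricting the $G$-action on $(Q,B)$ to $G_0$ gives a $\PSL(2,\R)$-homogeneous $\PU(2)$-connection on $\H^2$ whose $\Gamma$-quotient, for the standard inclusion $\Gamma\subset\PSL(2,\R)$, is $(P,A)$. This reduces the problem to the classification of section~\ref{homogSU(2)PU(2)}.

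Next I would invoke that classification through Theorem~\ref{ThBiTe}: $(Q,B)$ is isomorphic, as a $\PSL(2,\R)$-homogeneous pair, to $(P_\chi,A_{\chi,\mu})$ for a unique class in ${\cal M}(\PSL(2,\R),H,\PU(2))=\{[\theta_k\circ\sigma,0]\mid k\in\N\setminus\{1\}\}\cup{\cal M}_1$, with ${\cal M}_1=\{[\theta_1\circ\sigma,r\mu_0]\mid r\geq 0\}$. The representatives $A_{\theta_k\circ\sigma,0}$ for $k\neq 1$ and $A_{\theta_1\circ\sigma,0}$ have vanishing $\mu$-part, hence admit a $\PSL(2,\R)$-homogeneous $\S^1$-reduction, so their stabilisers contain a maximal torus of $\PU(2)$ (all of $\PU(2)$ when $k=0$) and are non-trivial; these classes are excluded by the irreducibility of $B$. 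For $r>0$ I would check that $A_{\theta_1\circ\sigma,r\mu_0}$ is irreducible: since $\lambda_{\theta_1\circ\sigma,r\mu_0}(\g)=(\theta_1\circ\sigma)_*(\hg)+r\mu_0(\sg)=\R a_3+\langle a_1,a_2\rangle=\su(2)$ while, by (\ref{curvAchimu}), the curvature value $\Phi_{\theta_1\circ\sigma,r\mu_0}(b_1,b_2)=(1+r^2)a_3$ is non-zero, the holonomy algebra at $y_0$ — the smallest subalgebra of $\su(2)$ containing the curvature values and stable under $\ad_{\lambda_{\theta_1\circ\sigma,r\mu_0}(X)}$ for every $X\in\g$ — is all of $\su(2)$; as $\H^2$ is simply connected the holonomy group is then $\PU(2)$, and the stabiliser, being its centraliser, is the trivial centre of $\PU(2)$. (Effectiveness of the $\PSL(2,\R)$-action on $P_{\theta_1\circ\sigma}$ holds automatically here, the kernel of $\hat G\to G$ being trivial.) Hence $(Q,B)\simeq(P_{\theta_1\circ\sigma},A_{\theta_1\circ\sigma,r\mu_0})$ as $\PSL(2,\R)$-homogeneous pairs for a unique $r>0$, and taking $\Gamma$-quotients — $\Gamma$ acts freely and properly discontinuously on $P_{\theta_1\circ\sigma}$ because it does so on $\H^2$ — yields $(P,A)\simeq(P_{\theta_1\circ\sigma},A_{\theta_1\circ\sigma,r\mu_0})/\Gamma$, which is the existence part.

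For the uniqueness of $r$ I would pass to curvature norms. Fixing an $\ad$-invariant inner product on $\su(2)$, the curvature value $F^{y_0}_{A_{\theta_1\circ\sigma,r\mu_0}}=(1+r^2)e^1_{x_0}\wedge e^2_{x_0}\,a_3$ together with $\PSL(2,\R)$-homogeneity shows that the pointwise norm $|F_{A_{\theta_1\circ\sigma,r\mu_0}}|_{g_{\H^2}}$ is the constant $c(1+r^2)$ for some $c>0$; being $\Gamma$-invariant it descends to the constant $c(1+r^2)$ on $M$, which is an invariant of the isomorphism class of the pair. Thus an isomorphism $(P_{\theta_1\circ\sigma},A_{\theta_1\circ\sigma,r\mu_0})/\Gamma\simeq(P_{\theta_1\circ\sigma},A_{\theta_1\circ\sigma,r'\mu_0})/\Gamma$ forces $1+r^2=1+r'^2$, hence $r=r'$.

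I expect the main obstacle to be the step that isolates the irreducible models: proving rigorously that irreducibility of $B$ is equivalent to the class of $(Q,B)$ lying in ${\cal M}_1$ with parameter strictly positive, i.e.\ the holonomy/stabiliser computation above. A secondary point requiring care is the reduction step, where one must make sure Theorem~\ref{mainIntro} really produces a $G$-homogeneous pair — and not merely a pair with $G$-invariant gauge class — so that restricting to $G_0=\PSL(2,\R)$ and then invoking Theorem~\ref{ThBiTe} and section~\ref{homogSU(2)PU(2)} is legitimate.
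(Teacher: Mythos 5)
Your proposal is correct and follows essentially the same route as the paper: reduce via Theorem~\ref{mainIntro} (with trivial stabiliser, so $\hat G\simeq G$) and Remark~\ref{unimodular} to $\PSL(2,\R)$-homogeneous $\PU(2)$-connections on $\H^2$, then invoke the classification of section~\ref{homogSU(2)PU(2)} with ${\cal M}_1\simeq[0,\infty)$. The only difference is that you make explicit two points the paper leaves implicit --- the Wang-type holonomy argument showing that irreducibility singles out the classes $[\theta_1\circ\sigma,r\mu_0]$ with $r>0$, and the constant curvature-norm $c(1+r^2)$ argument for the uniqueness of $r$ on the quotient --- both of which are sound.
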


Using the results proved in section \ref{umbilic} we obtain a geometric interpretation of this 1-parameter family  of locally homogeneous  $\PU(2)$-connections on $M$ in terms of umbilic foliations on hyperbolic 3-manifolds:
\begin{pr}\label{hypcyl}
There exists a hyperbolic metric $g_\M$ on $\M:=\R\times M$ 
such that, putting $M_t:=\{t\}\times M$ and denoting by $\B\in{\cal A}(\SO(\M))$ the Levi-Civita connection of $\M$, one has
\begin{enumerate}
\item 	$M_0$ is totally geodesic and hyperbolic.
\item For any  $x\in M$ the path $t\mapsto (t,x)$ is a geodesic of $\M$. 
 \item  For  any $t\in\R$ the surface $M_t$ is   umbilic  in $\M$ and the restriction
 $$(\resto{\SO(\M)}{M_t}, \resto{\B}{M_t})$$
 is isomorphic to the $\Gamma$-quotient of $(P_{\theta_1\circ\sigma},A_{\theta_1\circ\sigma,\sinh(t)\mu_0})$.
\end{enumerate}
	
\end{pr}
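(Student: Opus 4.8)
The plan is to obtain $(\M,g_\M)$ as a quotient of the hyperbolic space $(\H^3,g)$ by an isometric extension of the $\Gamma$-action, arranged so that the umbilic foliation ${\cal F}=(H_t)_{t\in\R}$ of Section~\ref{umbilic} descends to the product foliation $(M_t)_{t\in\R}$ of $\R\times M$. The three assertions then follow by pushing down the corresponding statements about $\H^3$, using that the covering projection is a local isometry and that Proposition~\ref{GeomInt} already identifies the leafwise data upstairs.

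\textbf{Step 1: extension of the $\Gamma$-action to $\H^3$.} In the upper half-space model, with the identifications of Section~\ref{umbilic} under which $H_0\cong\H^2$ is totally geodesic, the Poincaré extension realizes $\PSL(2,\R)=\Iso^+(\H^2)=\Iso^+(H_0)$ as a subgroup of $\PSL(2,\C)=\Iso^+(\H^3)$; composing with $\Gamma\hookrightarrow\PSL(2,\R)$ one obtains an isometric action of $\Gamma$ on $\H^3$ whose restriction to $H_0$ is the original action and which is free and properly discontinuous (already on $H_0$). Since, as recalled in Section~\ref{umbilic}, the leaves $H_t$ are exactly the hypersurfaces equidistant from $H_0$, every isometry fixing $H_0$ setwise permutes the $H_t$; because $\PSL(2,\R)$ is connected it preserves $H_0$ together with its coorientation, hence so does $\Gamma$, so each $\gamma\in\Gamma$ fixes every $H_t$ and commutes with the normal geodesic flow $f_t$. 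Therefore $(t,y)\mapsto f_t(y)$ is a diffeomorphism $\R\times H_0\to\H^3$ (the normal exponential map of the totally geodesic $H_0$ in the Cartan--Hadamard manifold $\H^3$), and it is $\Gamma$-equivariant for the action $\gamma\cdot(t,y)=(t,\gamma y)$. Passing to quotients gives $\R\times(H_0/\Gamma)\to\H^3/\Gamma$, and since $H_0/\Gamma\cong\H^2/\Gamma=(M,g_M)$ as hyperbolic surfaces, this exhibits $\M=\R\times M$ together with the metric $g_\M$ obtained by pushing down $g$; the projection $\pi:\H^3\to\M$ is a Riemannian covering with $\pi(H_t)=M_t$ and $\pi(f_t(\tilde x))=(t,\pi(\tilde x))$.

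\textbf{Step 2: the three assertions.} Claim (1): $M_0=\pi(H_0)$ is totally geodesic because total geodesy is local and preserved by the local isometry $\pi$, and $H_0$ is totally geodesic in $\H^3$; moreover $(M_0,\resto{g_\M}{M_0})$ is isometric to $\H^2/\Gamma=(M,g_M)$, hence hyperbolic. Claim (2): given $x\in M$, pick $\tilde x\in H_0$ over $x$; by the first remark of Section~\ref{umbilic} the curve $t\mapsto f_t(\tilde x)$ is a geodesic of $\H^3$, and its $\pi$-image is $t\mapsto(t,x)$, which is therefore a geodesic of $\M$. Claim (3): $M_t=\pi(H_t)$ is umbilic since $H_t$ is umbilic in $\H^3$ and umbilicity is local; and since the frame bundle and the Levi--Civita connection are natural under local isometries, $\pi$ restricts to a covering $H_t\to M_t$ identifying $(\resto{\SO(\M)}{M_t},\resto{\B}{M_t})$ with the $\Gamma$-quotient of $(\resto{\SO(\H^3)}{H_t},\resto{B_{\mathrm{LC}}}{H_t})$. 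By Proposition~\ref{GeomInt}, pulling back along $\varphi_t:\H^2\to\H^3$ identifies the latter pair with $(P_{\theta_1\circ\sigma},A_{\theta_1\circ\sigma,\sinh(t)\mu_0})$, and because the extended action commutes with $f_t$ one has $\gamma\circ\varphi_t=\varphi_t\circ\gamma$ for all $\gamma\in\Gamma$, so this identification carries the $\Gamma$-action inherited from $\H^3$ to the $\Gamma$-action on $P_{\theta_1\circ\sigma}$ coming from the structural $G$-action. Taking $\Gamma$-quotients yields Claim (3).

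\textbf{Main obstacle.} The delicate point is this last one: one must know that the isomorphism supplied by Proposition~\ref{GeomInt} is compatible with the $G$-actions on the two sides, i.e. is an isomorphism of $G$-homogeneous pairs and not merely of pairs, so that it descends to the $\Gamma$-quotients. I would settle this by revisiting the construction in the proof of Proposition~\ref{GeomInt}: the identification of $\varphi_t^*(\SO(\H^3))$ with $P_{\theta_1\circ\sigma}$ is assembled from orthonormal frames transported by the $G$-equivariant flow $f_t$, hence is $G$-equivariant, and the equality of the two connections was reduced there to the equality of second fundamental forms, a $G$-invariant identity. Everything else --- the product-foliation description of $\H^3$, the descent of metrics and connections along the Riemannian covering $\pi$, and the naturality of the Levi--Civita connection --- is routine.
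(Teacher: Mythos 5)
Your argument is correct and follows essentially the same route as the paper's proof: extend the $\Gamma$-action from $\H^2\cong H_0$ to $\H^3$ compatibly with the normal flow (the paper constructs $\psi\mapsto\Psi$ with $\Psi\circ\varphi_t=\varphi_t\circ\psi$), identify the quotient of $\H^3$ with $\R\times M$ via $(t,x)\mapsto\varphi_t(x)$, pull back $g_{\H^3}$, and reduce claim (3) to Proposition \ref{GeomInt}. Your explicit check that the identification of Proposition \ref{GeomInt} is equivariant, hence descends to the $\Gamma$-quotients, just spells out a point the paper leaves implicit.
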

\begin{proof}
Using the notations introduced in section \ref{umbilic}, note  that for any orientation preserving isometry $\psi$ of $\H^2$ there exists an orientation preserving isometry $\Psi$ of $\H^3$ such that  $\Psi\circ\varphi_t=\varphi_t\circ \psi$ for any $t$. The isometry $\Psi$ is obtained by extending $\psi$ in the obvious way along the geodesics $\gamma^x$, $x\in \H^2$. The obtained map $\psi\mapsto \Psi$ is a Lie group monomorphism.  Let $\hat\Gamma$ be the image of $\Gamma$ under this monomorphism, and put $\hat M:=\H^3/\hat\Gamma$. We obtain a diffeomorphism
$$\hat\varphi:\R\times M\to \hat M  
$$
induced by the diffeomorphism $\R\times\H^2\ni (t,x)\to \varphi_t(x)\in\H^3$. Putting $g_\M:=\hat\varphi^*(g_{\H^3})$	the claim follows.
\end{proof}
Proposition \ref{hypcyl} shows that
\begin{re}
 All locally homogeneous  $\PU(2)$-connections on $M$ with irreducible pull-back to $\H^2$  can be obtained, up to equivalence, by restricting the Levi-Civita connection of the hyperbolic cylinder $\M$ to the umbilic leaves $M_t$ for $t\in (0,\infty)$.	
\end{re}

\subsubsection{Locally homogeneous  connections with Abelian, non-flat pull-back}

Suppose now that the stabilizer of $B$ is $\S^1$. Therefore the holonomy of $B$ at a point $y\in Q$ is contained in the centralizer of $\S^1$ in $\S^1$ (which is $\S^1$), so $Q$ admits a $B$-invariant $\S^1$-reduction $Q_0\subset Q$. The direct sum decomposition $\su(2)=i\R\oplus\C$ is $\S^1$-invariant; the subgroup $\S^1\subset\PU(2)$ acts trivially on the first summand, and with weight 1 on the second. Therefore, we obtain a $B$-invariant direct sum decomposition
\begin{equation}\label{dsdec} 
\ad(Q)=Q_0\times_{\S^1}\su(2)=i\underline{\R}\oplus L	
\end{equation}
where $L$ is a Hermitian line bundle on $\H^2$. The curvature form $F_B$ takes values in the first summand, so   $F_B=\vol_{\H^2}\eta_B$ with $\eta_B\in {\cal C}^\infty(\H^2,i\R)$. 

The local homogeneity condition of $A$ shows that $F_B$ has constant norm, so $\eta_B$ is constant. 

Since $B$ is not flat, it follows that $\eta_B$ is a non-vanishing constant. The decomposition  (\ref{dsdec}) is $B$-parallel, so $\eta_B$ is also a $B$-parallel section of $\ad(Q)$. The group $\Gamma$ acts by $B$-preserving bundle isomorphisms of $Q$ and by orientation preserving isometries on $\H^2$, so it leaves invariant $F_B$ and  $\vol_{\H^2}$; it follows that it also leaves invariant the section $\eta_B\in A^0(\H^2,\ad(Q))$.  Therefore $\eta_B$ descends to a non-trivial $A$-parallel section of $\ad(P)$, which implies that $P$  admits an  $A$-parallel $\S^1$-reduction $P_0\subset P$. The obtained $\S^1$-connection $A_0\in {\cal A}(P_0)$ is still locally homogeneous, so the classification of locally homogeneous $\PU(2)$-connections of this type on $M$ reduces to the classification of   locally homogeneous $\S^1$-connections, which has been studied in section \ref{S1section}. 

\subsubsection{Locally homogeneous  connections with flat pull-back} 
If the stabiliser of $B$ is $\PU(2)$, then $A$ will be flat, so the pair $(P,A)$ will be defined by a representation $\pi_1(M)=\Gamma\to\PU(2)$.   The moduli space of isomorphism classes of flat $\PU(2)$-connections on $M$ is just the quotient $\Hom(\pi_1(M),\PU(2))/\PU(2)$ of morphisms $\pi_1(M)\to \PU(2)$ modulo conjugation. The statement generalizes for an arbitrary locally homogeneous Riemannian manifold $(M,g)$ and Lie group $K$:  any flat $K$-connection is LH, and the moduli space of isomorphism classes of flat $K$-connections on $M$ can be identified with the quotient $\Hom(\pi_1(M),K)/K$. The case when $(M,g)$ is a  Riemann surface and $K=\PU(r)$ is especially interesting because of the classical Narasimhan–Seshadri theorem \cite{NaSe}, \cite{Do} which states that a holomorphic rank $r$-bundle on a Riemann surface (regarded as a complex projective curve) is polystable if and only if it admits a projectively flat Hermitian metric. In the special case of bundles of degree 0, we get an isomorphism of moduli spaces 
$$\Hom(\pi_1(M),\SU(r))/\SU(r)\textmap{\simeq}{\cal M}^{\rm pst}(r,0)$$
onto the moduli space of  polystable holomorphic rank $r$-bundles of degree 0.


\vskip 0,65 true cm


\begin{thebibliography}{BBBa}
	
	%
	%
	\bibitem[Ba1]{Ba1} Bazdar, A.: {\it Locally homogeneous triples: Extension theorems for parallel sections and parallel bundle isomorphisms},  Mediterr. J. Math. 14(2), Art. 55, 17pp. (2017).
	%
	\bibitem [Ba2]{Ba2} Bazdar, A.: {\it Geometric principal bundles over geometric Riemannian manifolds}, Ph.D. thesis, Aix-Marseille University (2017). \texttt{http://www.theses.fr/2017AIXM0210}
	%
	\bibitem[BK]{BK}  Böhm, C., Kerr, M.: {\it Low dimensional homogeneous Einstein manifolds.} Trans. Amer. Math. Soc. 358(4) 1455-1468 (2006).
	
	\bibitem[Bi]{Bi} Biswas, I.: {\it Homogeneous principal bundles over the upper half-plane}, Kyoto J. Math. 50(2), 325-363 (2010).
	%
	\bibitem[BiTe]{BiTe} Biswas, I., Teleman, A.: {\it Invariant connections and invariant holomorphic bundles on homogeneous manifolds}, Cent. Eur. J. Math. 12(1), 1-13 (2014).
	
	\bibitem[Do]{Do} Donaldson, S.: {\it A new proof of a theorem of Narasimhan and Seshadri}, Journal of Differential Geometry, 18(2), 269-277 (1983).
	%
	\bibitem[DK]{DK}  Donaldson, S., Kronheimer, P.: {\it The Geometry of
		Four-Manifolds}, Oxford University Press (1990).
	%
	\bibitem [Fi]{Fi}  Filipkiewicz, R.O.: {\it Four dimensional geometries}, Ph.D. thesis, University of Warwick, (1984).
	%
	\bibitem[FZ]{FZ} Florit, L., Ziller, W.: {\it Topological obstructions to fatness revisited}, Geometry \& Topology (15)2, 891-925 (2011).
	%
	\bibitem[Je]{Je}  Jensen, G.: {\it Einstein metrics on principal fiber bundles}, Journal of Differential Geometry (8)4, 599-614 (1973).
	%
	%
	\bibitem[NaSe]{NaSe} Narasimhan, M., Seshadri, C.: {\it Stable and unitary vector bundles on a compact Riemann surface}, Annals of Mathematics, Second Series, 82, 540–567,  (1965).
	
	\bibitem[SaWa]{SaWa} Sagle, A., Walde,  R.: {\it Introduction to Lie groups and Lie algebras}, Academic Press (1973).
	%
	\bibitem[Sc]{Sc}  Scott, P.: {\it The geometries of 3-manifolds}, Bull. London Math. Soc. 15(5), 401-487 (1983).
	
	\bibitem[Si]{Si}  Singer, I. M.: {\it  Infinitesimally homogeneous spaces }, Comm. Pure Appl. Math. 13, 685-697 (1960).
	
	%
	\bibitem[Te]{Te} Teleman, A.: {\it Introduction à la théorie de jauge}, SMF, Cours Spécialisés, vol. 18 (2012).
	%
	\bibitem[Th]{Th}  Thurston, W. P.: {\it Three-dimensional manifolds, Kleinian groups and hyperbolic geometry}, Bull. Amer. Math. Soc. (N.S)  6(3),  357-381 (1982).
	%
	\bibitem[Ue1]{Ue1} M. Ue: {\it Geometric 4-manifolds in the sense of Thurston and Seifert 4-manifolds I}, J. Math. Soc. Japan (42)3, 511-540 (1990).
	
	\bibitem[Ue2]{Ue2} M. Ue: {\it Geometric 4-manifolds in the sense of Thurston and Seifert 4-manifolds II}, J. Math. Soc. Japan (43)1, 149-183 (1991).
	
	
	\bibitem [Wals]{Wals} Walschap, G.: {\it Umbilic foliations and curvatures}, Illinois Journal of Mathematics  (41), Nr. I, 122-128  (1997).
	%
	%
	\bibitem[WZ]{WZ} Wang, M., Ziller, W.: {\it Einstein metrics on principal torus bundles}, Journal of Differential Geometry (31), 215-248 (1990). 
	%
	\bibitem[Wa]{Wa} Wall, C. T. C.: {\it Geometric structures on compact complex analytic surfaces}, Topology (25)2, 119-153 (1986).
	%
	\bibitem [Ya]{Ya} Yang, K.: {\it Almost Complex Homogeneous Spaces and Their Submanifolds}, World Scientific, Singapor (1987).
	%
\end{thebibliography}
\end{document}